\newtheorem{theorem}{Theorem}[subsection]
\newtheorem{proposition}[theorem]{Proposition}
\newtheorem{corollary}[theorem]{Corollary}
\newtheorem{lemma}[theorem]{Lemma}
\theoremstyle{definition}
\newtheorem{definition}[theorem]{Definition}
\newtheorem{example}[theorem]{Example}
\newtheorem{problem}[theorem]{Problem}
\numberwithin{equation}{subsection}
\numberwithin{table}{subsection}
\newcommand{\Ext}{\operatorname{Ext}}
\newcommand{\0}{\bar 0}
\newcommand{\1}{\bar 1}
\newcommand{\ch}{\operatorname{ch}}
\begin{document}

\title[On Sheaf Cohomology  for Supergroups]
{\bf On Sheaf Cohomology for Supergroups Arising from Simple Classical Lie Superalgebras}

\author{\sc David M. Galban}
\address
{Department of Mathematics\\ University of Georgia \\
Athens\\ GA~30602, USA}
\thanks{Research of the first author was supported in part by NSF
RTG grant 1344994.}  

\email{david.galban25@uga.edu}

\author{\sc Daniel K. Nakano}
\address
{Department of Mathematics\\ University of Georgia \\
Athens\\ GA~30602, USA}
\thanks{Research of the second author was supported in part by NSF
grants  DMS-1701768 and DMS-2101941.}

\email{nakano@uga.edu}

\dedicatory{In memory of James E. Humphreys} 
\subjclass[2010]{Primary 17B56, 17B10; Secondary 13A50}

\keywords{Lie superalgebras, sheaf cohomology.}

\date\today

\maketitle

\begin{abstract} In this paper the authors study the behavior of the sheaf cohomology functors $R^{\bullet}\text{ind}_{B}^{G}(-)$ where $G$ is 
an algebraic group scheme corresponding to a simple classical Lie superalgebra and $B$ is a BBW parabolic subgroup as defined in \cite{GGNW}.
We provide a systematic treatment that allows us to study the behavior of these cohomology groups $R^{\bullet}\text{ind}_{B}^{G} L_{\mathfrak f}(\lambda)$ 
where $L_{\mathfrak f}(\lambda)$ is an irreducible representation for the detecting subalgebra ${\mathfrak f}$. In particular, we prove an analog of 
Kempf's vanishing theorem and the Bott-Borel-Weil theorem for large weights. 
\end{abstract} 

\vskip 1cm 

\section{Introduction}

\subsection{} Let $G$ be a reductive algebraic group over an algebraically closed field $k$. If $B$ is a Borel subgroup of $G$ then the sheaf cohomology 
groups: 
$$H^{\bullet}(\lambda):={\mathcal H}^{\bullet}(G/B,{\mathcal L}(\lambda))\cong R^{\bullet}\text{ind}_{B}^{G}\ \lambda$$ 
play a central role in the representation theory for $G$. It is well-known that the irreducible $G$-modules are indexed by dominant integral weights, $X_{+}$, 
and can be realized as the socles of $H^{0}(\lambda)$. More precisely, for any $\lambda\in X_{+}$, one has $L(\lambda)=\text{soc}_{G}H^{0}(\lambda)$. 
Another result that holds over arbitrary $k$ is Kempf's vanishing theorem which states that for $\lambda\in X_{+}$, $H^{n}(\lambda)=0$ for $n>0$. When 
the field $k$ is of characteristic zero, the rational representations of $G$ are completely reducible, and a description of $H^{\bullet}(\lambda)$ is given 
via the classical Bott-Borel-Weil (BBW) theorem. For fields of characteristic $p>0$, the general vanishing behavior for $H^{\bullet}(\lambda)$ is not known, and it is not clear 
how to formulate an appropriate generalization of the BBW theorem. Additional information in regards to vanishing behavior  
is due to Andersen for $n=1$ \cite{And} where he described $\text{soc}_{G}H^{1}(\lambda)$ for all weights $\lambda$. 

Now consider the more general case when $G$ is a supergroup scheme with $\text{Lie }G={\mathfrak g}$ where ${\mathfrak g}$ is a classical ``simple" Lie superalgebra over ${\mathbb C}$, and $P$ a parabolic 
supergroup scheme of $G$. A central problem in the super-representation theory is to understand the behavior of the sheaf cohomology groups 
$R^{i}\text{ind}_{P}^{G}(-)$. Zubkov has published a general discussion on this topic \cite{Zubkov}. Specific calculations of sheaf cohomology have been made 
for specific supergroups such as ${GL}(m|n)$, ${OSP}(m|2n)$, and ${Q}(n)$ with certain parabolic/Borel subgroups \cite{Zubkov,GrS1,GrS2,P,PS,S1,S2}. 
From these computations, it is not clear if there is a general theory that can be applied for all classical simple Lie superalgebras like the one for reductive algebraic groups 
where computations of sheaf cohomology is related to the combinatorics of finite reflection groups. The existence of a Bott-Borel-Weil theory for supergroups has been an open questions since the 1980s. Recently, Coulembier has developed a BBW theory for basic classical Lie superalgebras \cite{Col}. 

D. Grantcharov, N. Grantcharov, Nakano and Wu \cite{GGNW} introduced a family of parabolic subgroups for $G$ called {\em BBW parabolic} subgroups. 
These parabolic subgroups arise naturally when considering the detecting subalgebras as defined by Boe, Kujawa and Nakano in the mid 2000s \cite{BKN1}. In 
\cite{GGNW}, it was shown that if $B$ is a BBW parabolic, the polynomial $p_{G,B}(t)=\sum_{i=0}^{\infty} \dim R^{i}\text{ind}_{B}^{G} {\mathbb C}\ t^{i}$ is equal to 
a Poincar\'e polynomial for a finite reflection group $W_{\1}$ specialized at a power of $t$. The existence of the BBW parabolics was also used in 
\cite{GGNW} to resolve a 15 year old conjecture posed in \cite{BKN1} on the realization of the cohomological support varieties for the pair $({\mathfrak g},{\mathfrak g}_{\0})$ as 
a rank variety over a detecting subalgebra. 

\subsection{} The main goal of this paper is to provide a systematic treatment of the higher sheaf cohomology groups $R^{\bullet}\text{ind}_{B}^{G}L_{\mathfrak f}(\lambda)$ 
where $B$ is BBW parabolic subgroups and $L_{\mathfrak f}(\lambda)$ is an irreducible representation for the detecting subalgebra ${\mathfrak f}$. 
For the reader who is familiar with the setting for reductive algebraic groups, we provide a treatment similar to the one presented in \cite[Part II]{Jan}. Many of the constructions are analogous. However, for supergroups, there are differences in the behavior of $H^{n}(\lambda)$ that will be indicated at various points in the paper. 

The paper is organized as follows. In Section 2, we present the basic notion and conventions that will be use throughout the paper. In particular, the BBW parabolic subgroups and subalgebras from \cite{GGNW} are reintroduced. In the following section (Section 3), we construct an important spectral sequence that allows us to compare the sheaf cohomology for a supergroup scheme 
$G$ when restricted to the even component $G_{\0}$ to the sheaf cohomology for $G_{\0}$. Applications are presented that demonstrate how several of the key results from \cite{GGNW} can 
be obtained through this new approach. 

In Section 4, it is shown that, as in the reductive group situation, the socles of the induced modules $H^{0}(\lambda)$ where $G$ is a supergroup arising from a simple classical 
Lie superalgebra and $B$ is a BBW parabolic subgroup are either zero or a simple $G$-module. This yields a classification of irreducible rational $G$-representations. The vanishing behavior of the higher sheaf cohomology modules $H^{n}(\lambda)$ is explored in Section 5. 
In particular, we prove an analog of Kempf's vanishing theorem and the Bott-Borel-Weil Theorem in this context. Examples are provided for $Q(n)$, $n=2,3$. 

Finally, in Section 6, our results are applied to obtain results on $H^{1}(\lambda)$. In the reductive group case the socles are either zero or irreducible for these modules. 
In \cite{GGNW}, it was shown this need not be the case for supergroups $G$. We provide a formula for calculating the socle of $H^{1}(\lambda)$ for $G$, and give a criterion
for the irreducibility of the socle of $H^{1}(\lambda)$. These ideas were inspired by the earlier work of Andersen on $H^{1}(\lambda)$ for reductive groups. 
Our analysis is then applied to show that for $G=Q(n)$, $H^{1}(\lambda)$ has simple socle for all non-dominant weights $\lambda$. 

\subsection{Acknowledgements} The authors acknowledge the many contributions James E. Humphreys (``Jim") made in the area of algebraic group representations. 
His books and expository articles were instrumental in the development of the field, and he also wrote several important papers on the structure of line bundle cohomology 
\cite{Hum1, Hum2, Hum3, Hum4, Hum5}. 

Jim was very supportive of younger mathematicians in representation theory. When the second author first entered the field, Jim wrote an extensive Mathematical Review (MR) on his 
Ph.D. thesis on {\em Projective Modules over Lie algebras of Cartan type} in 1990. The second author thanks Jim for his words of encouragement, careful reading of his work and for his guidance over the past 30 years.

\section{Overview: Key Concepts}\label{S:RepTheory}

\subsection{Notation: } We will use and summarize the conventions developed in
\cite{BKN1, BKN2, BKN3, LNZ, GGNW}. For more details we refer the reader to \cite[Section 2]{BKN1}. 

Let ${\mathfrak g}={\mathfrak g}_{\0}\oplus {\mathfrak g}_{\1}$ be a Lie superalgebra over the complex numbers ${\mathbb C}$ with 
supercommutator $[\;,\;]:{\mathfrak g}\otimes {\mathfrak g} \to {\mathfrak g}$. At times, we will impose more conditions on ${\mathfrak g}$. 
For instance, ${\mathfrak g}$ is called \emph{classical} if (i) there is a connected reductive algebraic group $G_{\0}$ such
that $\operatorname{Lie}(G_{\0})={\mathfrak g}_{\0},$ and (ii) the action of $G_{\0}$ on ${\mathfrak g}_{\1}$ differentiates to
the adjoint action of ${\mathfrak g}_{\0}$ on ${\mathfrak g}_{\1}.$  Furthermore, a Lie superalgebra ${\mathfrak g}$ is  \emph{basic classical}
if it is classical with a nondegenerate invariant supersymmetric even bilinear form. 

The superanalogs for reductive groups will be supergroup schemes that arise from classical ``simple'' Lie superalgebras. As far as the authors know, there has not been a 
formal theory developed for reductive supergroup schemes. The classical ``simple" Lie superalgebras  are not always simple, but are close enough to being simple. 
These Lie superalgebras have appeared frequently in the literature and are of general interest:  
\begin{itemize} 
\item $\mathfrak{gl}(m|n)$, $\mathfrak{sl}(m|n)$, $\mathfrak{psl}(n|n)$\ \ \  [{\em general and special linear Lie superalgebras}],
\item $\mathfrak{osp}(m,n)$\ \ \ [{\em ortho-symplectic Lie superalgebras}],
\item $D(2,1,\alpha)$, $F(4)$, $G(3)$\ \ \ [{\em exceptional Lie superalgebras}],
\item $\mathfrak{q}(n)$, $\mathfrak{psq}(n)$\ \ \ [{\em queer Lie superalgebras}],
\item ${\mathfrak p}(n)$, $\widetilde{{\mathfrak p}}(n)$\ \ \ [{\em periplectic Lie superalgebras}].
\end{itemize} 
For the queer Lie superalgebras, $\mathfrak{q}(n)$ will be the Lie superalgebra with even and odd parts $\mathfrak{gl}_n$, while $\mathfrak{psq}(n)$ is the corresponding simple subquotient of $\mathfrak{q}(n)$ (cf. \cite{PS}). The periplectic Lie superalgebras include ${\mathfrak p}(n)$ with even component $\mathfrak{sl}_{n}$ and its enlargement $\widetilde{{\mathfrak p}}(n)$ with  
even component $\mathfrak{gl}_n$.

Let $U({\mathfrak g})$ be the universal enveloping superalgebra of ${\mathfrak g}$. Supermodules over Lie 
superalgebras can be viewed as unital module for $U({\mathfrak g})$. If $M$ and $N$ are ${\mathfrak g}$-modules
one can employ the properties of $U({\mathfrak g})$ as a super Hopf algebra to define a ${\mathfrak g}$-module
structure on the dual $M^{*}$ and the tensor product $M\otimes N$. 

The cohomology theory of ${\mathfrak g}$-modules has a natural interpretation when one uses 
relative cohomology. The projective objects are relatively projective $U({\mathfrak g}_{\0})$-module, and every $U({\mathfrak g})$-module 
admits a relatively projective $U({\mathfrak g}_{\0})$-resolution. By using these facts, given ${\mathfrak g}$-modules, 
$M, N$, one can define the relative extension groups $\text{Ext}^{n}_{({\mathfrak g},{\mathfrak g}_{\0})}(M,N)$ by taking a relatively projective $U({\mathfrak g}_{\0})$-resolution for $M$. 
Furthermore, 
$$\text{Ext}^{n}_{({\mathfrak g},{\mathfrak g}_{\0})}(M,N)\cong \text{H}^{n}({\mathfrak g},{\mathfrak g}_{\0}; M^{*}\otimes N)$$
where $\text{H}^{n}({\mathfrak g},{\mathfrak g}_{\0}; M^{*}\otimes N)$ denotes relative Lie superalgebra cohomology which can be computed using an explicit complex (cf. 
\cite[3.1.8 Corollary, 3.1.15 Remark]{Kum},  \cite[Section 2.3]{BKN1}).

\subsection{Rational modules} Let $G$ be an affine supergroup scheme over ${\mathbb C}$ and $\text{Mod}(G)$ 
be the category of rational modules for $G$. Let $H$ be a closed subgroup scheme of $G$ and $R^{j}\text{ind}_{H}^{G}(-)$ be the higher right derived 
functors of the induction functor $\text{ind}_{H}^{G}(-)$. For a general overview about supergroup schemes and induction, the reader is referred to work 
of Brundan and Kleshchev. See \cite[Sections 2,4,5]{BruKl} \cite[Section 2]{Bru}. 

In the case when ${\mathfrak g}$ is a classical Lie superalgebra and ${\mathfrak g}=\text{Lie }G$, the category $\text{Mod}(G)$ is equivalent to locally finite integral 
modules for $\text{Dist}(G)=U({\mathfrak g})$ (cf. \cite[Corollary 5.7]{BruKl}). In particular, if 
${\mathfrak g}$ is a classical Lie superalgebra, then $\text{Mod}(G)$ is equivalent to ${\mathcal C}_{({\mathfrak g},{\mathfrak g}_{\0})}$ 
(i.e., the category of ${\mathfrak g}$-supermodules that are completely reducible over ${\mathfrak g}_{\0}$). The projectives in the category ${\mathcal C}_{({\mathfrak g},{\mathfrak g}_{\0})}$ 
are relatively projective $U({\mathfrak g}_{\0})$-modules. Therefore, if $M$ and $N$ are rational $G$-modules then 
$$\Ext^{n}_{G}(M,N)\cong \text{Ext}^{n}_{({\mathfrak g},{\mathfrak g}_{\0})}(M,N)$$
for all $n\geq 0$. 

\subsection{Parabolic subalgebras}\label{SS:parabolicsub} Let $\mathfrak{g}$ be a classical simple Lie superalgebra, and ${\mathfrak t}$ be a fixed maximal torus in ${\mathfrak g}_{\0}$. 
One can use the action of ${\mathfrak t}$ on ${\mathfrak g}$ to obtain a set of roots $\Phi$. We can now invoke the ideas presented by Grantcharov and Yakimov in \cite{GY} to 
define a parabolic subsets $S$ that correspond to parabolic subalgebras ${\mathfrak p}$ of ${\mathfrak g}$. For precise details, see \cite{GY}, \cite[3.1, 3.2]{GGNW}. 
Given a parabolic subalgebra ${\mathfrak p}$, one has a decomposition of $S=S_{0}\sqcup S^{-}$ with ${\mathfrak p}={\mathfrak l}\oplus {\mathfrak u}$ where 
(i) ${\mathfrak l}$ is the \emph{Levi subalgebra} with roots in $S_{0}$, and (ii) ${\mathfrak u}$ is the \emph{nilradical} of $\mathfrak{p}$ with roots in $S^{-}$. 

A parabolic subalgebra, ${\mathfrak b}$, is one that arises from taking a principal parabolic subset given by 
$S = S(\mathcal H) = S^0 \sqcup S^-$, where $\mathcal H$ is listed in \cite[Table 7.1.2]{GGNW}. In this case, 
${\mathfrak b}\cong {\mathfrak f}\oplus {\mathfrak u}$ where the Levi subalgebra is the detecting subalgebra ${\mathfrak f}$ that was first introduced in 
\cite{BKN1}. The Lie subalgebras ${\mathfrak f}$ (resp. ${\mathfrak u}$) are given in \cite[Table 7.1.1]{GGNW} (resp. \cite[Table 7.1.3]{GGNW}). 

The parabolic subalgebra ${\mathfrak b}$ is a parabolic subalgebra and technically is not a Borel subalgebra. In this paper, we will view ${\mathfrak b}$ as 
analogous to a Borel subalgebra for a simple Lie algebra arising from an algebraic group. The detecting subalgebra ${\mathfrak f}$ will be analogous to a maximal torus. 
There exists a natural triangular decomposition of ${\mathfrak g}={\mathfrak u}^{+}\oplus {\mathfrak f}\oplus {\mathfrak u}$ where the roots in ${\mathfrak u}^{+}$ (resp. ${\mathfrak u}$) coincide with $-(S^{-})$ (resp. $S^{-}$). Note the BBW parabolic subalgebra identifies with $\mathfrak{b} = \mathfrak{f} \oplus \mathfrak{u}$, and the BBW parabolic 
subalgebras are defined for classical simple Lie superalgebra that are not of type $P$. 

\begin{example} Consider the case when $G=GL(n|n)$ with ${\mathfrak g}=\mathfrak{gl}(n|n)$ or $G=Q(n)$ with ${\mathfrak g}={\mathfrak q}(n)$. The BBW parabolic ${\mathfrak b}$ can be realized in ${\mathfrak g}$ as 
 \[ 
{\mathfrak b}=\left\{  \left[
\begin{array}{c|c}
A  & B \\ \hline
 C & D
\end{array}\right] \in {\mathfrak g}:  A,\ B,\ C,\ D\in L_{n}({\mathbb C}) \right\}
 \]
where $L_{n}({\mathbb C})$ are the set of $n\times n$ lower triangular matrices. 
There exists a supergroup scheme $B$ with $\text{Lie }B={\mathfrak b}$ that corresponds to $\text{Dist}(B)=U({\mathfrak b})$.

In \cite[Theorem 4.10.1]{GGNW}, the sheaf cohomology $R^{\bullet}\text{ind}_{B}^{G} {\mathbb C}$ was completely described as a $G$-module. Its Poincar\'e series was shown to be equal to the 
Poincar\'e series of a finite reflection group specialized either at $s=t$ for $Q(n)$ or $s=t^{2}$ for $GL(n|n)$. 
\end{example} 

\section{Spectral sequence constructions} 

\subsection{Spectral Sequence I} Let $G$ be a supergroup scheme and $H$ be a closed subgroup scheme in $G$. Given an $H$-module, $M$, a natural question to ask is whether one can express $R^{\bullet}\text{ind}_{H}^{G} M$ when considered as a $G_{\0}$-module in terms of $R^{\bullet}\text{ind}_{H_{\0}}^{G_{\0}}(-)$. In \cite[Corollary 2.8]{Bru}, Brundan showed that this can be accomplished in the Grothendieck group of $G_{\0}$-supermodules by looking at alternating sums via Euler characters. This presents some difficulties when one wants to 
analyze $R^{n}\text{ind}_{H}^{G} M$ for a fixed $n$. The following theorem relates  $R^{n}\text{ind}_{H}^{G} M$ for a fixed $n$ as a $G_{0}$-module to certain cohomology groups for 
$R^{\bullet}\text{ind}_{H_{\0}}^{G_{\0}}(-)$ via a spectral sequence. Our construction was inspired by the result stated for the structure sheaf by Sam and Snowden (cf. 
\cite[Proposition 2.1]{sam-snowden}), and employs the work in \cite[Section 2]{Bru}. 

\begin{theorem}\label{T:spectralseq1} Let $G$ be a supergroup scheme and $H$ be a closed subgroup scheme of $G$, with ${\mathfrak g}=\operatorname{Lie }{\mathfrak g}$ and
${\mathfrak h}=\operatorname{Lie }{\mathfrak h}$. If $M$ be a $H$-module then there exists a spectral sequence 
$$E_{1}^{i,j}=R^{i+j}\operatorname{ind}_{H_{\0}}^{G_{\0}} [M\otimes \Lambda^{j}({\mathfrak g}_{\1}/{\mathfrak h}_{\1})^{*}] \Rightarrow [R^{i+j}\operatorname{ind}_{H}^{G}M]|_{G_{\0}}.$$ 
\end{theorem}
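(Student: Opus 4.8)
The plan is to combine the Leray spectral sequence for the affine ``reduction'' map $\pi\colon G/H\to Z$, where $Z:=(G/H)_{\mathrm{red}}\cong G_{\0}/H_{\0}$, with the hypercohomology spectral sequence of a finite filtration of a quasi-coherent sheaf on $Z$, following the template of \cite[Proposition 2.1]{sam-snowden}. The only nontrivial ingredient is the description of the sheaf being filtered, and this is what \cite[Section 2]{Bru} supplies through its analysis of the coordinate superalgebra $\C[G]$ as a $(G_{\0},H)$-bimodule; if $G/H$ is not representable as a superscheme, one replaces the geometry everywhere by $\C[G]$ together with the derived functors of the invariants functor $(-\otimes M)^{H}$, with identical bookkeeping. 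Note at the outset that $\pi$ and the splitting below are only $G_{\0}$-equivariant, not $G$-equivariant, which is exactly why the abutment is the restriction $[R^{\bullet}\operatorname{ind}_{H}^{G}M]|_{G_{\0}}$ rather than a statement about $G$.

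First I would record the geometry. Over $\C$, with $G_{\0}$ reductive, the closed immersion $Z\hookrightarrow G/H$ admits a $G_{\0}$-equivariant affine retraction $\pi\colon G/H\to Z$ with $\pi_{*}\mathcal O_{G/H}\cong\Lambda^{\bullet}\mathcal N^{*}$, where $\mathcal N=\mathcal L_{Z}(\g_{\1}/\h_{\1})$ is the odd part of the normal bundle of $Z$ in $G/H$, viewed as the $G_{\0}$-equivariant bundle associated to the $H_{\0}$-module $\g_{\1}/\h_{\1}$. Twisting by an $H$-module $M$, the quasi-coherent $\mathcal O_{Z}$-module $\pi_{*}\mathcal L(M)$ then carries a natural finite decreasing filtration $\pi_{*}\mathcal L(M)=F^{0}\supseteq F^{1}\supseteq\cdots$ by order along the odd normal directions, whose associated graded is
$$\operatorname{gr}^{j}\!\bigl(\pi_{*}\mathcal L(M)\bigr)\;\cong\;\mathcal L_{Z}\!\bigl(M\otimes\Lambda^{j}(\g_{\1}/\h_{\1})^{*}\bigr)$$
as $G_{\0}$-equivariant sheaves, where $M\otimes\Lambda^{j}(\g_{\1}/\h_{\1})^{*}$ is regarded as an $H_{\0}$-module by restriction. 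I expect this identification to be the main obstacle: one must know that the retraction $\pi$ and the filtration can be chosen $G_{\0}$-equivariantly, and, crucially, that the $H$-module structure on $M$ twists the successive subquotients by exactly $M\otimes\Lambda^{j}(\g_{\1}/\h_{\1})^{*}$ and nothing larger. This is precisely the content of \cite[Section 2]{Bru}.

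Granting this, I would finish as follows. Since $\pi$ is affine, $R^{>0}\pi_{*}=0$ on quasi-coherent sheaves, so
$$[R^{n}\operatorname{ind}_{H}^{G}M]|_{G_{\0}}\;=\;H^{n}\bigl(G/H,\mathcal L(M)\bigr)\;=\;H^{n}\bigl(Z,\pi_{*}\mathcal L(M)\bigr),$$
while $H^{\bullet}(Z,\mathcal L_{Z}(N))=R^{\bullet}\operatorname{ind}_{H_{\0}}^{G_{\0}}N$ for an $H_{\0}$-module $N$. Feeding the finite filtration of $\pi_{*}\mathcal L(M)$ into the hypercohomology spectral sequence of $H^{\bullet}(Z,-)$ produces a spectral sequence with
$$E_{1}^{i,j}=H^{i+j}\!\bigl(Z,\operatorname{gr}^{j}\pi_{*}\mathcal L(M)\bigr)=R^{i+j}\operatorname{ind}_{H_{\0}}^{G_{\0}}\bigl[M\otimes\Lambda^{j}(\g_{\1}/\h_{\1})^{*}\bigr]$$
abutting to $H^{i+j}(Z,\pi_{*}\mathcal L(M))=[R^{i+j}\operatorname{ind}_{H}^{G}M]|_{G_{\0}}$, with $d_{1}$ the connecting map of the short exact sequences $0\to\operatorname{gr}^{j+1}\to F^{j}/F^{j+2}\to\operatorname{gr}^{j}\to 0$. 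Convergence is automatic because the filtration has length $\dim_{\C}(\g_{\1}/\h_{\1})+1$, so all higher differentials vanish after a fixed page. As a consistency check, taking Euler characteristics along the $E_{1}$-page recovers the Grothendieck-group identity of \cite[Corollary 2.8]{Bru}; and for $M=\C$ the extra direct-sum splitting of $\pi_{*}\mathcal O_{G/H}$ forces degeneration at $E_{1}$, recovering the shape of the result used in \cite{GGNW}.
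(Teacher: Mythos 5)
Your underlying mechanism --- filter along the odd directions, identify the associated graded pieces via Brundan's analysis of $k[G]$, and run the spectral sequence of a finite filtration --- is exactly the mechanism of the paper's proof, which leans on the same two sources (Brundan's Section 2 and the Sam--Snowden template). But as written, your geometric scaffolding asserts two things that are not established and, in the first case, are not true in general. First, the existence of a $G_{\0}$-equivariant affine retraction $\pi\colon G/H\to Z$ with $\pi_{*}\mathcal{O}_{G/H}\cong\Lambda^{\bullet}\mathcal{N}^{*}$ amounts to (equivariant) splitness/projectedness of $G/H$; this is known to fail in general for super flag varieties and super Grassmannians, and it is in any case not available in the stated generality. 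It is also unnecessary: all the argument needs is the finite filtration of $\mathcal{L}(M)$ by the nilpotent ideal sheaf generated by odd functions, whose subquotients are $\mathcal{O}_{Z}$-modules, together with the fact that $G/H$ and $Z$ have the same underlying topological space. Second, the identification $R^{n}\operatorname{ind}_{H}^{G}M\cong H^{n}(G/H,\mathcal{L}(M))$ presupposes a well-behaved quotient superscheme for an arbitrary closed subgroup scheme $H\leq G$, which the hypotheses of the theorem do not grant you.

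The paper sidesteps both points by staying algebraic, which is precisely the route you gesture at in your fallback sentence but leave as ``identical bookkeeping.'' Concretely: one writes $R^{\bullet}\operatorname{ind}_{H}^{G}M=\operatorname{H}^{\bullet}(H,M\otimes k[G])$, takes an injective $H$-resolution $I_{\bullet}$ of $M$, tensors it with $k[G]$, and filters $k[G]$ by powers of ${\mathcal I}=k[G]k[G]_{\1}$, an $H$-$G_{\0}$-bimodule. The nontrivial step is that the induced filtration on the complex $C^{n}=\operatorname{H}^{0}(H,I_{n}\otimes k[G])$ has subquotients $\operatorname{H}^{0}(H,I_{n}\otimes{\mathcal I}^{k}/{\mathcal I}^{k+1})$, which uses the vanishing $\operatorname{H}^{1}(H,I_{n}\otimes{\mathcal I}^{k})=0$ coming from injectivity of $I_{n}$; after that, the filtered-complex spectral sequence of Kumar's Appendix E and Brundan's isomorphism $\operatorname{H}^{s}(H,M\otimes{\mathcal I}^{t}/{\mathcal I}^{t+1})\cong\operatorname{H}^{s}(H_{\0},M\otimes\Lambda^{t}(({\mathfrak g}_{\1}/{\mathfrak h}_{\1})^{*})\otimes k[G_{\0}])$ produce exactly the stated $E_{1}$-page, with convergence from the finite length of the filtration just as you say. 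So the idea is right and matches the paper's, but to make it a proof in the stated generality you must drop the retraction and the splitting $\pi_{*}\mathcal{O}_{G/H}\cong\Lambda^{\bullet}\mathcal{N}^{*}$, and either work with the nilpotent-ideal filtration directly (when a good quotient exists) or carry out the algebraic filtration argument above, including the injectivity step your proposal elides.
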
 

\begin{proof} We will apply the spectral sequence construction given in \cite[E9 Theorem, Appendix E]{Kum}. In order to do so we need to construct a 
convergent cochain filtration, $F$, bounded above on the cochain complex, $C$, whose cohomology is $[R^{\bullet}\operatorname{ind}_{H}^{G}M]|_{G_{\0}}$. This will yield a 
convergent spectral sequence where $E_{1}^{i,j}=\operatorname{H}^{i+j}(F^{i}C/F^{i+1}C)$. 

Recall that $R^{\bullet}\text{ind}_{H}^{G} M=\text{H}^{\bullet}(H,M\otimes k[G])=\text{Ext}^{\bullet}_{H}(k,M\otimes k[G])$ (cf. \cite[Section 1]{FP}). Let 
$$0\rightarrow M \rightarrow I_{0}\rightarrow I_{1} \rightarrow \dots.$$ 
be an injective $H$-resolution of $M$. By tensoring by $k[G]$ one has an injective $H$-resolution for $M\otimes k[G]$:
$$0\rightarrow M\otimes k[G] \rightarrow I_{0}\otimes k[G]\rightarrow I_{1}\otimes k[G]\rightarrow \dots.$$ 
Now one filters $k[G]$ by powers of ${\mathcal I}=k[G]k[G]_{\1}$. Note that ${\mathcal I}$ is a $H$-$G_{\0}$-bimodule. 

This induces a filtration on $C^{n}=\text{H}^{0}(H, I_{n}\otimes k[G])$: 
\begin{equation}
C^{n}\supseteq \text{H}^{0}(H, I_{n}\otimes {\mathcal I}) \supseteq \text{H}^{0}(H, I_{n}\otimes {\mathcal I}^{2})\supseteq \dots.
\end{equation} 
Since $I_{n}$ is injective, $\text{H}^{1}(H, I_{n}\otimes {\mathcal I}^{k})=0$, thus 
$$\text{H}^{0}(H, I_{n}\otimes {\mathcal I}^{k}/{\mathcal I}^{k+1}) \cong 
\text{H}^{0}(H, I_{n}\otimes {\mathcal I}^{k}/I_{n}\otimes {\mathcal I}^{k+1}) \cong \text{H}^{0}(H, I_{n}\otimes {\mathcal I}^{k})/\text{H}^{0}(H, I_{n}\otimes {\mathcal I}^{k+1}).$$ 

By applying the construction described in the first paragraph, there exists a spectral sequence 
$$E_{1}^{i,j}=\text{H}^{i+j}(H,M\otimes {\mathcal I}^{i}/{\mathcal I}^{i+1})\Rightarrow \text{H}^{i+j}(H,M\otimes k[G]).$$ 
The result now follows by applying the isomorphisms given in \cite[Theorem 2.7]{Bru}
$$\text{H}^{s}(H,M\otimes {\mathcal I}^{t}/{\mathcal I}^{t+1})\cong \text{H}^{s}(H_{\0}, M\otimes \Lambda^{t}(({\mathfrak g}_{\1}/{\mathfrak h}_{\1})^{*})\otimes k[G_{\0}]).$$ 
\end{proof} 

One of the immediate consequences of this spectral sequence is the following fact. Let $G$ be a supergroup scheme arising from a classical Lie superalgebra. In this case 
$G_{\0}$ is reductive. Let $P$ be a parabolic subgroup scheme which implies that $G_{\0}/P_{\0}$ is a projective variety. Then $R^{n}\text{ind}_{P_{\0}}^{G_{\0}}$ takes 
finite-dimensional rational $P_{\0}$-modules to finite-dimensional rational $G_{\0}$-module. It now follows from Theorem~\ref{T:spectralseq1} and the fact that $R^{n}\text{ind}_{P_{\0}}^{G_{\0}}(-)=0$ for $n>\dim 
G_{\0}/ P_{\0}$ that if 
$M$ is a finite-dimensional rational $P$-module then $R^{n}\text{ind}_{P}^{G} M$ is a finite-dimensional rational $G$-module for all $n\geq 0$.

\subsection{Applications} In this section we demonstrate how several key results in \cite[Propositions 4.1.1 and 4.1.2]{GGNW} can be steamlined with shorter and more efficient proofs by using the the spectral sequence in Theorem~\ref{T:spectralseq1}\footnote{In the original statement of \cite[Proposition 4.1.1]{GGNW}, $i$ is used instead of $j$. In Corollary~\ref{C:Gzeroiso}, we use $j$ to facilitate a smoother transition 
between the notation used in the spectral sequence given in Theorem~\ref{T:spectralseq1}.}.

\begin{corollary} \label{C:Gzeroiso} Let ${\mathfrak g}=\operatorname{Lie }G$ be a classical simple Lie superalgebra and $P$ be a parabolic 
subgroup with $M$ a $P$-module. 
\begin{itemize} 
\item[(a)] Assume that 
$R^{n}\operatorname{ind}_{P_{\0}}^{G_{\0}} [M \otimes \Lambda^{i}(({\mathfrak g}_{\1}/{\mathfrak p}_{\1})^{*})]=0$  when $n\neq j$. 
Then 
$$(R^{n}\operatorname{ind}_{P}^{G} M)|_{G_{\0}}\cong R^{n}\operatorname{ind}_{P_{\0}}^{G_{\0}} [M \otimes \Lambda^{\bullet}(({\mathfrak g}_{\1}/{\mathfrak p}_{\1})^{*})]$$
for $n\geq 0$. 
\item[(b)] Assume that $M\cong {\mathbb C}$ and 
$R^{n}\operatorname{ind}_{P_{\0}}^{G_{\0}} [\Lambda^{i}(({\mathfrak g}_{\1}/{\mathfrak p}_{\1})^{*})]=0$ for $n\neq j$. Then 
$$(R^{n}\operatorname{ind}_{P}^{G} {\mathbb C})|_{G_{\0}}\cong R^{n}\operatorname{ind}_{P_{\0}}^{G_{\0}} [\Lambda^{\bullet}(({\mathfrak g}_{\1}/{\mathfrak p}_{\1})^{*})]$$
for $n\geq 0$. 
\end{itemize} 
\end{corollary}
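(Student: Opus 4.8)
The plan is to feed the hypotheses into the spectral sequence of Theorem~\ref{T:spectralseq1} and watch it collapse. First I would specialize Theorem~\ref{T:spectralseq1} to $H=P$, which gives a convergent spectral sequence
$$E_{1}^{i,j}=R^{i+j}\operatorname{ind}_{P_{\0}}^{G_{\0}}\bigl[M\otimes \Lambda^{j}(({\mathfrak g}_{\1}/{\mathfrak p}_{\1})^{*})\bigr]\ \Rightarrow\ \bigl[R^{i+j}\operatorname{ind}_{P}^{G}M\bigr]\big|_{G_{\0}},$$
with differentials $d_{r}\colon E_{r}^{i,j}\to E_{r}^{i+r,j-r+1}$. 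The hypothesis of~(a) says that $R^{n}\operatorname{ind}_{P_{\0}}^{G_{\0}}[M\otimes \Lambda^{j}(({\mathfrak g}_{\1}/{\mathfrak p}_{\1})^{*})]$ vanishes whenever $n\neq j$; applying this with $n=i+j$ shows that $E_{1}^{i,j}=0$ for every $i\neq 0$. Hence the $E_{1}$-page, and therefore every later page, is supported in the single column $i=0$.

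I would then read off the abutment. Because the page lives in the column $i=0$, each differential $d_{r}$ with $r\ge 1$ has its source or target in a column $\neq 0$, so it vanishes; thus $E_{\infty}^{i,j}=E_{1}^{i,j}$. For a fixed total degree $n\ge 0$ the only possibly nonzero entry is $E_{\infty}^{0,n}$, so the induced filtration on $[R^{n}\operatorname{ind}_{P}^{G}M]|_{G_{\0}}$ has a unique nonzero graded piece and carries no extension data, giving
$$\bigl[R^{n}\operatorname{ind}_{P}^{G}M\bigr]\big|_{G_{\0}}\ \cong\ E_{1}^{0,n}\ =\ R^{n}\operatorname{ind}_{P_{\0}}^{G_{\0}}\bigl[M\otimes \Lambda^{n}(({\mathfrak g}_{\1}/{\mathfrak p}_{\1})^{*})\bigr].$$
Since $R^{n}\operatorname{ind}_{P_{\0}}^{G_{\0}}(-)$ is additive and, by hypothesis, $R^{n}\operatorname{ind}_{P_{\0}}^{G_{\0}}[M\otimes \Lambda^{\ell}(({\mathfrak g}_{\1}/{\mathfrak p}_{\1})^{*})]=0$ for every $\ell\neq n$, the right-hand side is unchanged when $\Lambda^{n}$ is replaced by the full exterior algebra $\Lambda^{\bullet}$; this is exactly statement~(a). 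Statement~(b) is the special case $M\cong {\mathbb C}$ of~(a), and needs nothing further.

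I do not anticipate a deep obstacle once Theorem~\ref{T:spectralseq1} is available: the proof is bookkeeping with a degenerate spectral sequence. The two points needing care are (i) checking that the vanishing hypothesis genuinely kills \emph{every} entry outside the column $i=0$, so that the collapse is honest and the abutment filtration has no hidden extension, and (ii) identifying the surviving term $R^{n}\operatorname{ind}_{P_{\0}}^{G_{\0}}[M\otimes \Lambda^{n}(\cdots)]$ with the expression $R^{n}\operatorname{ind}_{P_{\0}}^{G_{\0}}[M\otimes \Lambda^{\bullet}(\cdots)]$ appearing in the statement — which, as noted, is just additivity of the induction functor combined with the hypothesis. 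It is worth recording separately, although it is not part of proving the corollary, that in the situations of interest the vanishing hypothesis is supplied by Kempf's vanishing theorem for the reductive group $G_{\0}$, and that this is the mechanism by which \cite[Propositions 4.1.1 and 4.1.2]{GGNW} are recovered.
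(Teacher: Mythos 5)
Your proposal is correct and follows essentially the same route as the paper: specialize the spectral sequence of Theorem~\ref{T:spectralseq1} to the parabolic (so the $E_1$-page is $R^{i+j}\operatorname{ind}_{P_{\0}}^{G_{\0}}[M\otimes\Lambda^{j}(({\mathfrak g}_{\1}/{\mathfrak p}_{\1})^{*})]$), observe the hypothesis confines the page to the column $i=0$, use the bidegree $(r,1-r)$ of the differentials to collapse it, and read off $E_{1}^{0,n}$, with (b) as the special case $M\cong{\mathbb C}$. Your explicit remark that $\Lambda^{n}$ may be replaced by $\Lambda^{\bullet}$ via additivity and the vanishing hypothesis is a detail the paper leaves implicit, but it is the same argument.
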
 
\begin{proof} Observe that part (b) which is \cite[Proposition 4.1.1(b)]{GGNW} follows immediately from part (a). Also, note that part (a) is a stronger version of 
\cite[Proposition 4.1.1(a)]{GGNW}.

For part (a), set $H=P_{\0}$ and apply the spectral sequence given in Theorem~\ref{T:spectralseq1}. Under the assumption, one has $E_{1}^{i,j}=0$ when $i+j\neq j$ or 
equivalently $E_{1}^{i,j}=0$ unless $i=0$. The spectral sequence lives on the vertical axis (i.e., $E_{1}^{0,j}$ for $j\geq 0$). Using the fact that the bidgrees of $d_{r}$ are $(r,1-r)$ 
(cf. \cite[E.9 Theorem, proof]{Kum}), it follows that the spectral sequence collapses and yields the isomorphism. 
\end{proof}

\begin{corollary} \label{C:Gzeroiso2} Let ${\mathfrak g}=\operatorname{Lie }G$ be a classical simple Lie superalgebra and $P$ be a parabolic 
subgroup with $M$ a $P$-module.  Assume that 
$R^{n}\operatorname{ind}_{P_{\0}}^{G_{\0}} [M \otimes \Lambda^{\bullet}(({\mathfrak g}_{\1}/{\mathfrak p}_{\1})^{*})]=0$ for $n> 0$. 
Then 
$$(R^{n}\operatorname{ind}_{P}^{G} M)|_{G_{\0}}\cong R^{n}\operatorname{ind}_{P_{\0}}^{G_{\0}} [M \otimes \Lambda^{\bullet}(({\mathfrak g}_{\1}/{\mathfrak p}_{\1})^{*})]$$
for $n\geq 0$. 
\end{corollary}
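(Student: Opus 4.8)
The plan is to reduce Corollary~\ref{C:Gzeroiso2} to an application of the spectral sequence in Theorem~\ref{T:spectralseq1} together with a vanishing argument, essentially mimicking the proof of Corollary~\ref{C:Gzeroiso}(a) but with a weaker hypothesis on the individual graded pieces. First I would set $H = P_{\0}$ and $\mathfrak{h} = \mathfrak{p}_{\0}$, and write down the spectral sequence
$$E_{1}^{i,j} = R^{i+j}\operatorname{ind}_{P_{\0}}^{G_{\0}}\bigl[M \otimes \Lambda^{j}((\mathfrak{g}_{\1}/\mathfrak{p}_{\1})^{*})\bigr] \Rightarrow [R^{i+j}\operatorname{ind}_{P}^{G}M]|_{G_{\0}}.$$
The key observation is that each $E_{1}^{i,j}$ is a subquotient (in fact, appears as a summand of an associated graded) of $R^{i+j}\operatorname{ind}_{P_{\0}}^{G_{\0}}[M \otimes \Lambda^{\bullet}((\mathfrak{g}_{\1}/\mathfrak{p}_{\1})^{*})]$, since the total exterior algebra decomposes as a direct sum of its graded components as a $P_{\0}$-module. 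Hence the hypothesis that $R^{n}\operatorname{ind}_{P_{\0}}^{G_{\0}}[M \otimes \Lambda^{\bullet}((\mathfrak{g}_{\1}/\mathfrak{p}_{\1})^{*})] = 0$ for $n > 0$ forces $E_{1}^{i,j} = 0$ whenever $i + j > 0$, i.e., $E_{1}^{i,j}$ is concentrated in the single spot $(i,j) = (0,0)$.

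Next I would observe that with the $E_{1}$-page supported only at $(0,0)$, all differentials $d_{r}$ vanish (their bidegrees $(r, 1-r)$ move any nonzero entry into a zero region, and nothing maps into $(0,0)$), so the spectral sequence degenerates at $E_{1}$. Therefore the abutment $[R^{n}\operatorname{ind}_{P}^{G}M]|_{G_{\0}}$ is zero for $n > 0$ and equals $E_{1}^{0,0} = \operatorname{ind}_{P_{\0}}^{G_{\0}}[M \otimes \Lambda^{0}((\mathfrak{g}_{\1}/\mathfrak{p}_{\1})^{*})]$ for $n = 0$. To finish, I would note that under the stated vanishing hypothesis the higher-degree graded components $\Lambda^{j}$ with $j > 0$ contribute nothing to $R^{0}\operatorname{ind}_{P_{\0}}^{G_{\0}}[M \otimes \Lambda^{\bullet}(\cdot)]$ either — more precisely, $R^{0}\operatorname{ind}_{P_{\0}}^{G_{\0}}[M \otimes \Lambda^{\bullet}((\mathfrak{g}_{\1}/\mathfrak{p}_{\1})^{*})]$ decomposes as the direct sum over $j$ of $R^{0}\operatorname{ind}_{P_{\0}}^{G_{\0}}[M \otimes \Lambda^{j}(\cdot)]$, each of which is a subobject of the corresponding higher-cohomology-free term, so no cancellation issues arise. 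This identifies $E_{1}^{0,0}$ with the full $R^{0}\operatorname{ind}_{P_{\0}}^{G_{\0}}[M \otimes \Lambda^{\bullet}((\mathfrak{g}_{\1}/\mathfrak{p}_{\1})^{*})]$, and both sides of the claimed isomorphism vanish for $n > 0$, yielding the result.

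The main subtlety — not really an obstacle, but the point requiring care — is the bookkeeping between the graded pieces $\Lambda^{j}$ and the total complex $\Lambda^{\bullet}$: one must be sure that the hypothesis "$R^{n}\operatorname{ind}_{P_{\0}}^{G_{\0}}[M \otimes \Lambda^{\bullet}(\cdot)] = 0$ for $n>0$" really does imply $R^{n}\operatorname{ind}_{P_{\0}}^{G_{\0}}[M \otimes \Lambda^{j}(\cdot)] = 0$ for each individual $j$ when $n > 0$. This follows because induction is additive and $M \otimes \Lambda^{\bullet}((\mathfrak{g}_{\1}/\mathfrak{p}_{\1})^{*}) \cong \bigoplus_{j} M \otimes \Lambda^{j}((\mathfrak{g}_{\1}/\mathfrak{p}_{\1})^{*})$ as $P_{\0}$-modules, so $R^{n}\operatorname{ind}_{P_{\0}}^{G_{\0}}$ of the sum is the sum of the $R^{n}\operatorname{ind}_{P_{\0}}^{G_{\0}}$ of the summands; vanishing of the sum forces vanishing of each term. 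Once this is in hand, the argument is formally identical to Corollary~\ref{C:Gzeroiso}(a), just invoked with the weaker (but more uniform) hypothesis.
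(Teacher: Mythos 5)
Your reduction to Theorem~\ref{T:spectralseq1} with $H=P$ (so $H_{\bar 0}=P_{\bar 0}$) is the right move and matches the paper's strategy, and your observation that the hypothesis forces $R^{n}\operatorname{ind}_{P_{\bar 0}}^{G_{\bar 0}}[M\otimes\Lambda^{j}((\mathfrak{g}_{\bar 1}/\mathfrak{p}_{\bar 1})^{*})]=0$ for $n>0$ and every $j$ (since $\Lambda^{\bullet}$ is a finite direct sum of the $\Lambda^{j}$ and induction is additive) is correct. But there is a genuine error in how you read off the $E_{1}$-page: the vanishing hypothesis only concerns total degree $i+j>0$, so the surviving terms live on the whole antidiagonal $i+j=0$, not at the single spot $(0,0)$. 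The antidiagonal entries are the modules $R^{0}\operatorname{ind}_{P_{\bar 0}}^{G_{\bar 0}}[M\otimes\Lambda^{j}((\mathfrak{g}_{\bar 1}/\mathfrak{p}_{\bar 1})^{*})]$, one for each exterior degree $j$, and nothing in the hypothesis kills these for $j>0$; indeed they are typically nonzero (in the setting of Theorem~\ref{T:Kempf}(b), for example, every exterior degree contributes to $H^{0}(\lambda)|_{G_{\bar 0}}$). Your attempt to repair this at the end, by asserting that the components with $j>0$ ``contribute nothing'' to $R^{0}\operatorname{ind}_{P_{\bar 0}}^{G_{\bar 0}}[M\otimes\Lambda^{\bullet}(\cdot)]$, is false and also inconsistent with your own earlier remark that each such term is a summand of that module; if it were true, the corollary would reduce to $(R^{0}\operatorname{ind}_{P}^{G}M)|_{G_{\bar 0}}\cong \operatorname{ind}_{P_{\bar 0}}^{G_{\bar 0}}M$, which is not what is claimed and contradicts Brundan's Euler characteristic formula.

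The argument is rescued exactly as in the paper (and as in Corollary~\ref{C:Gzeroiso}): with $E_{1}$ supported on $i+j=0$, every differential $d_{r}$, having bidegree $(r,1-r)$, raises total degree by $1$ and hence vanishes, so the spectral sequence collapses at $E_{1}$. For $n>0$ the abutment $[R^{n}\operatorname{ind}_{P}^{G}M]|_{G_{\bar 0}}$ is zero, matching the right-hand side, while in degree $0$ the abutment carries a filtration whose associated graded is $\bigoplus_{i+j=0}E_{1}^{i,j}\cong\bigoplus_{j\geq 0}R^{0}\operatorname{ind}_{P_{\bar 0}}^{G_{\bar 0}}[M\otimes\Lambda^{j}((\mathfrak{g}_{\bar 1}/\mathfrak{p}_{\bar 1})^{*})]=R^{0}\operatorname{ind}_{P_{\bar 0}}^{G_{\bar 0}}[M\otimes\Lambda^{\bullet}((\mathfrak{g}_{\bar 1}/\mathfrak{p}_{\bar 1})^{*})]$; since $G_{\bar 0}$ is reductive over $\mathbb{C}$, rational $G_{\bar 0}$-modules are semisimple, so this filtration splits and the stated isomorphism holds for $n=0$ as well. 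In short: keep the whole antidiagonal rather than collapsing it to $(0,0)$, and conclude via degeneration plus semisimplicity, not via vanishing of the higher exterior contributions in degree zero.
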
 

\begin{proof} Set $H=P_{\0}$ and apply the spectral sequence given in Theorem~\ref{T:spectralseq1}. In this case, one has $E_{1}^{i,j}=0$ unless $i+j=0$ or $j=-i$. The spectral 
sequence collapses because the bidegrees of $d_{r}$ are $(r,1-r)$ for $r\geq 1$, and the result follows. 
\end{proof}

\subsection{Spectral Sequence II} One can use the theorem in \cite[I. 4.1 Proposition]{Jan} to construct a spectral sequence that relates the composition of two induction functors. 

\begin{theorem}\label{T:spectralseq2} Let $G$ be a supergroup scheme with $H\leq K \leq G$ an inclusion of closed subgroup schemes contained in $G$. If $N$ is a 
$H$-module then there exists a first quadrant spectral sequence 
$$E_{2}^{i,j}=R^{i}\operatorname{ind}_{K}^{G} [R^{j}\operatorname{ind}_{H}^{K} N] \Rightarrow R^{i+j}\operatorname{ind}_{H}^{G} N.$$ 
\end{theorem}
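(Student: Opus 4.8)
The plan is to mimic exactly the classical argument for the Grothendieck composite-functor spectral sequence (as in \cite[I.4.1]{Jan}), the point being that all of the hypotheses needed to run that machine are already available in the supergroup setting. First I would recall that the induction functor $\operatorname{ind}_H^G(-)$ is left exact, that $\operatorname{ind}_H^G = \operatorname{ind}_K^G \circ \operatorname{ind}_H^K$ as functors on rational modules (transitivity of induction for closed subgroup schemes, \cite[I.3.4]{Jan} adapted to supergroup schemes), and that $R^\bullet\operatorname{ind}_H^G(-)$ is the derived functor of this left-exact functor, computed via an injective resolution in $\operatorname{Mod}(H)$. This gives the setup for a Grothendieck spectral sequence for the composite of two left-exact functors $F = \operatorname{ind}_K^G$ and $G' = \operatorname{ind}_H^K$.

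The one hypothesis that must be checked to apply the Grothendieck spectral sequence is that $\operatorname{ind}_H^K(-)$ sends injective objects of $\operatorname{Mod}(H)$ to $F$-acyclic objects, i.e. to objects on which the higher derived functors $R^{>0}\operatorname{ind}_K^G$ vanish. The key step is therefore: if $I$ is an injective $H$-module, then $\operatorname{ind}_H^K I$ is an injective $K$-module, hence certainly acyclic for $\operatorname{ind}_K^G$. This is the super-analog of \cite[I.3.9, I.4.1]{Jan}: because $\operatorname{ind}_H^K$ is right adjoint to the (exact) restriction functor $\operatorname{res}^K_H$, it preserves injectives. In the supergroup scheme setting this adjunction is precisely Frobenius reciprocity, available from \cite[Section 2]{Bru} and \cite[Sections 2,4,5]{BruKl}. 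Once that is in hand, the standard homological algebra (filter an injective resolution of $N$ by $\operatorname{ind}_H^K$, take $\operatorname{ind}_K^G$, and run the hypercohomology/double-complex argument) produces the first-quadrant spectral sequence
\[
E_2^{i,j}=R^i\operatorname{ind}_K^G\bigl[R^j\operatorname{ind}_H^K N\bigr]\Rightarrow R^{i+j}\operatorname{ind}_H^G N,
\]
with first-quadrant convergence because both functors have vanishing negative derived functors and their composite computes $R^\bullet\operatorname{ind}_H^G N$ by transitivity.

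Concretely, the steps in order are: (1) verify transitivity $\operatorname{ind}_K^G\circ\operatorname{ind}_H^K \cong \operatorname{ind}_H^G$ for closed subgroup schemes, citing \cite{Bru,BruKl}; (2) verify left-exactness of each induction functor; (3) verify that injective $H$-modules are sent by $\operatorname{ind}_H^K$ to injective (hence $\operatorname{ind}_K^G$-acyclic) $K$-modules, using Frobenius reciprocity and exactness of restriction; (4) invoke the Grothendieck spectral sequence for a composite of left-exact functors (for instance via \cite[E.9 Theorem, Appendix E]{Kum}, exactly as in the proof of Theorem~\ref{T:spectralseq1}, or via \cite[I.4.1]{Jan}), obtaining the displayed $E_2$-page and convergence.

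I expect the only genuine obstacle to be step (3): one must be careful that the category $\operatorname{Mod}(K)$ of rational $K$-modules (equivalently, for $K$ arising from a classical Lie superalgebra, the category ${\mathcal C}_{({\mathfrak k},{\mathfrak k}_{\0})}$) has enough injectives and that the adjunction between $\operatorname{res}^K_H$ and $\operatorname{ind}_H^K$ holds at the level of these rational-module categories — these facts are exactly what \cite[Sections 2,4,5]{BruKl} and \cite[Section 2]{Bru} provide, so the verification is a matter of quoting the right statements rather than proving anything new. Everything else is the routine Grothendieck-spectral-sequence formalism already used in the proof of Theorem~\ref{T:spectralseq1}.
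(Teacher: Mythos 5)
Your proposal is correct and follows the same route the paper indicates: the paper's entire justification is to invoke the composite-functor (Grothendieck) spectral sequence of \cite[I.4.1 Proposition]{Jan}, which rests on exactly the ingredients you spell out — transitivity of induction, left exactness, and preservation of injectives by $\operatorname{ind}_H^K$ via its adjunction with the exact restriction functor. Your step (3) and the remark about enough injectives in the supergroup setting simply make explicit the hypotheses the paper leaves to the cited references.
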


\subsection{Spectral Sequence III}
The third spectral sequence below was constucted in \cite[Proposition 6.2.1]{GGNW} and relates the relative Lie superalgebra cohomology with sheaf cohomology. 
The standard construction involves a composition of left exact functors. This spectral sequence is a first quadrant spectral sequence and the differentials also have bidegree 
$(r,1-r)$. This spectral sequence can be viewed analogously to the one relating cohomology for algebraic groups and sheaf cohomology presented in \cite[I.4.5 Proposition]{Jan}. 

\begin{theorem}\label{T:spectralseq3} Let $G$ be a supergroup scheme where ${\mathfrak g}$ is a classical simple Lie superalgebra, and $H$ be a closed subgroup scheme 
of $G$ with ${\mathfrak h}=\operatorname{Lie }H$. If  $M_{1}$ is a $G$-module and $M_{2}$ is a $H$-module then there exists a first quadrant spectral sequence. 
$$E_{2}^{i,j}=\operatorname{Ext}^{i}_{({\mathfrak g},{\mathfrak g}_{\0})} (M_{1},R^{j}\operatorname{ind}_{H}^{G} M_{2})\Rightarrow 
\operatorname{Ext}_{({\mathfrak h},{\mathfrak h}_{\0})}^{i+j}(M_{1},M_{2}). $$ 
\end{theorem}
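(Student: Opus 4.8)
The plan is to construct this spectral sequence as a Grothendieck spectral sequence for a composition of left-exact functors, exactly mirroring the construction of \cite[I.4.5 Proposition]{Jan} but with rational-module functors replaced by their relative-cohomology counterparts. The key observation is that, since ${\mathfrak g}$ is a classical simple Lie superalgebra, the category $\text{Mod}(G)$ is equivalent to ${\mathcal C}_{({\mathfrak g},{\mathfrak g}_{\0})}$ and has enough injectives, and similarly for $\text{Mod}(H)$; moreover, by the discussion in Section 2, $\Ext^{\bullet}_{G}(-,-)\cong \Ext^{\bullet}_{({\mathfrak g},{\mathfrak g}_{\0})}(-,-)$ and likewise over $H$. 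So I would first reinterpret the target $\Ext^{i+j}_{({\mathfrak h},{\mathfrak h}_{\0})}(M_{1},M_{2})$ as $\Ext^{i+j}_{H}(M_{1}|_{H}, M_{2})$, a derived functor on $\text{Mod}(H)$.

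The main step is to factor the functor $N \mapsto \Hom_{H}(M_{1}|_{H}, N)$ (from $\text{Mod}(H)$ to abelian groups) as the composition
$$\text{Mod}(H) \xrightarrow{\ \text{ind}_{H}^{G}\ } \text{Mod}(G) \xrightarrow{\ \Hom_{G}(M_{1},-)\ } (\text{abelian groups}).$$
The required natural isomorphism $\Hom_{G}(M_{1},\text{ind}_{H}^{G} N)\cong \Hom_{H}(M_{1}|_{H}, N)$ is precisely Frobenius reciprocity for supergroup schemes (see \cite[Section 2]{Bru}, \cite[Sections 2,4,5]{BruKl}), which holds in this setting. To invoke the Grothendieck spectral sequence I then need: (i) both functors are left-exact — clear, since $\text{ind}_{H}^{G}$ is left-exact and $\Hom_G(M_1,-)$ is left-exact; (ii) $\text{ind}_{H}^{G}$ sends injective $H$-modules to $\Hom_{G}(M_{1},-)$-acyclic $G$-modules. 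For (ii), recall that if $I$ is an injective object of $\text{Mod}(H)$ then $\text{ind}_{H}^{G} I$ is an injective object of $\text{Mod}(G)$: this follows because $\text{ind}_{H}^{G}$ is right adjoint to the exact restriction functor $(-)|_{H}$, so it preserves injectives. Injective $G$-modules are in particular acyclic for $\Hom_{G}(M_{1},-)$, so (ii) holds. The derived functors of the pieces are $R^{j}\text{ind}_{H}^{G}$ and $\Ext^{i}_{G}(M_{1},-)\cong \Ext^{i}_{({\mathfrak g},{\mathfrak g}_{\0})}(M_{1},-)$, and the derived functors of the composite are $\Ext^{i+j}_{H}(M_{1}|_{H},-)\cong \Ext^{i+j}_{({\mathfrak h},{\mathfrak h}_{\0})}(M_{1},-)$, giving exactly
$$E_{2}^{i,j}=\Ext^{i}_{({\mathfrak g},{\mathfrak g}_{\0})}(M_{1},R^{j}\text{ind}_{H}^{G} M_{2})\Rightarrow \Ext^{i+j}_{({\mathfrak h},{\mathfrak h}_{\0})}(M_{1},M_{2}).$$
First-quadrant-ness is automatic since both $i,j\geq 0$, and the differentials of a Grothendieck spectral sequence have bidegree $(r,1-r)$.

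The main obstacle I anticipate is verifying the acyclicity/injectivity statement (ii) carefully in the super setting — that is, checking that the adjunction between restriction and induction genuinely holds for affine supergroup schemes and that restriction is exact, so that induction preserves injectives. This is where one must lean on the foundational results of Brundan–Kleshchev and Brundan rather than on reductive-group intuition; the parity/sign subtleties in the Hopf-superalgebra structure need to be handled, but these are already packaged in \cite[Corollary 5.7]{BruKl} and the identification $\text{Mod}(G)\simeq {\mathcal C}_{({\mathfrak g},{\mathfrak g}_{\0})}$. Once that identification and Frobenius reciprocity are in hand, the rest is the standard Grothendieck spectral sequence machinery with no further super-specific complications, and the translation between $\Ext_G$ and $\Ext_{({\mathfrak g},{\mathfrak g}_{\0})}$ is already recorded in Section 2.
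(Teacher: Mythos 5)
Your construction---the Grothendieck spectral sequence for the composite $\Hom_{G}(M_{1},-)\circ \operatorname{ind}_{H}^{G}$, using Frobenius reciprocity, left exactness, and the fact that $\operatorname{ind}_{H}^{G}$ preserves injectives because it is right adjoint to the exact restriction functor---is exactly the ``composition of left exact functors'' that the paper alludes to (it cites \cite[Proposition 6.2.1]{GGNW} and the analogy with \cite[I.4.5 Proposition]{Jan}), and that part of your argument is fine. The gap is in your identification of the abutment. Run in the rational module categories, your spectral sequence converges to $\Ext^{i+j}_{\text{Mod}(H)}(M_{1}|_{H},M_{2})$, and you pass from this to $\Ext^{i+j}_{({\mathfrak h},{\mathfrak h}_{\0})}(M_{1},M_{2})$ by asserting that the identification of rational with relative extensions from Section 2 holds ``likewise over $H$.'' That step is not available: the equivalence $\text{Mod}(G)\simeq {\mathcal C}_{({\mathfrak g},{\mathfrak g}_{\0})}$ uses that ${\mathfrak g}$ is classical, i.e.\ that $G_{\0}$ is reductive, so that rational modules are completely reducible over the even part. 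A general closed subgroup scheme $H$---and in particular the BBW parabolic $B$, the only case in which the theorem is later applied (Theorem~\ref{T:compareg-b})---has non-reductive even part, rational $H$-modules are not $H_{\0}$-semisimple, and the two Ext theories genuinely differ: relative $({\mathfrak h},{\mathfrak h}_{\0})$-cohomology is computed from $\Hom_{{\mathfrak h}_{\0}}(\Lambda^{\bullet}({\mathfrak h}_{\1}),-)$ and sees only the odd directions, whereas rational $H$-cohomology also sees the even unipotent radical. Already for the underlying even groups this shows the discrepancy: for a Borel subgroup $B_{\0}\subseteq GL_{2}$ over ${\mathbb C}$ one has $\Ext^{1}_{\text{Mod}(B_{\0})}({\mathbb C},{\mathbb C}_{-\alpha})\neq 0$ (rational cohomology of the unipotent radical), while every relative extension group in positive degree for the pair $({\mathfrak b}_{\0},{\mathfrak b}_{\0})$ vanishes.

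So the one genuinely super-specific point of the theorem---the bridge between $\Ext^{\bullet}_{\text{Mod}(H)}$ and $\Ext^{\bullet}_{({\mathfrak h},{\mathfrak h}_{\0})}$---is exactly the step you wave through, and it is where the content of \cite[Proposition 6.2.1]{GGNW} lies. To repair your argument you must either add a hypothesis forcing $H_{\0}$ to be linearly reductive, or make the comparison for the coefficients that actually occur: for instance, starting from $R^{\bullet}\operatorname{ind}_{H}^{G}M_{2}=\operatorname{H}^{\bullet}(H,M_{2}\otimes {\mathbb C}[G])$ (as in the proof of Theorem~\ref{T:spectralseq1}) and using that ${\mathbb C}[G]$ restricts to an injective rational $H_{\0}$-module, one can replace rational $H$-cohomology by relative $({\mathfrak h},{\mathfrak h}_{\0})$-cohomology for these particular modules and then carry out the composite-functor argument in the relative setting. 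As written, your proof establishes a correct spectral sequence, but with abutment $\Ext^{i+j}_{\text{Mod}(H)}(M_{1},M_{2})$ rather than the relative groups appearing in the statement.
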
 

\section{Irreducible Representations via $H^{0}(\lambda)$}\label{S:simplemodules}

\subsection{} Throughout this section, we will assume that $G$ is a classical simple algebraic supergroup scheme and $B$ is a BBW parabolic for $G$. 
In particular, we will be tacitly assuming that $G$ is not of type $P$. Recall that one has a triangular decomposition ${\mathfrak g}={\mathfrak u}\oplus {\mathfrak f}\oplus {\mathfrak u}^{+}$ 
with corresponding supergroup schemes $G$, $U$, $F$ and $U^{+}$. The supergroup schemes $U$ and $U^{+}$ are unipotent, that is, the only finite-dimensional simple module for these subgroup schemes is ${\mathbb C}$. 

There exists a maximal torus $T_{\0}$ contained in the even part of $F$, and set $X=X(T_{\0})$. For a given $F$, there exists a subset of weights $X_{F}\subseteq X$ that 
indexes the set of irreducible representations for $F$. For $\lambda\in X_{F}$, let $L_{\mathfrak f}(\lambda)$ be the corresponding simple $F$-module. One can then inflate this module to $B=F\ltimes U$, and consider $H^{n}(\lambda)=R^{n}\text{ind}_{B}^{G} L_{\mathfrak f}(\lambda)$. 

The goal of this section is to show how to classify finite-dimensional simple $G$-module via $G$-socles of $H^{0}(\lambda)$. The proofs follows along the same lines are in \cite[II Chapter 2] {Jan} 
\cite[Section 6]{BruKl}, and generalize the results for $Q(n)$ stated in \cite[Theorem 4.4]{Bru2}.

\subsection{Simple $F$-modules} For the algebraic supergroup scheme $F$ where $\text{Lie }F={\mathfrak f}$ is a detecting subalgebra, one can determine the set $X_{F}$. 

\begin{example} Let $G=Q(n)$. In this case $F\cong Q(1)\times Q(1)\times \dots \times Q(1)$, and $X_{F}=X(T_{\0})$. The irreducible representations are given by modules 
$u(\lambda)$ \cite[Lemma 6.4]{BruKl}. 
\end{example}

\begin{example} Let $G=GL(n|n)$. The subgroup $F\cong GL(1|1)\times GL(1|1)\times \dots GL(1|1)$ and $X_{F}=X(T_{\0})$. The irreducible representations are formed by taking outer tensor products of simple $GL(1|1)$-representations which are either one-dimensional or two-dimensional. 
\end{example} 

\begin{example} Let $G=GL(m,n)$. Without loss of generality we may assume that $m\leq n$. In this case, $F\cong [GL(1|1)\times GL(1|1)\times \dots GL(1|1)]\times GL_{n-m}$ where there are 
$m$ copies of $GL(1|1)$. Moreover, $X_{F}=X(T_{1})\times X(T_{2})_{+}$ where $T_{1}$ is a maximal torus for $GL(1|1)\times GL(1|1)\times \dots GL(1|1)$, and $T_{2}$ is a maximal torus for 
$GL_{n-m}$. Note that one must take dominant weights on $X(T_{2})$, and $X_{F}$ is a proper subset of $X$. 
\end{example}

\subsection{} Let $L$ be a finite-dimensional simple $G$-module. Then for some $\lambda\in X_{F}$, $\text{Hom}_{B}(L,L_{\mathfrak f}(\lambda))\neq 0$. Therefore, by 
Frobenius reciprocity $0\neq \text{Hom}_{G}(L,H^{0}(\lambda))$, and $L\hookrightarrow H^{0}(\lambda)$ for some $\lambda\in X_{F}$. Let $X_{F,+}=\{\lambda\in X_{F}:\ H^{0}(\lambda)\neq 0\}$. 

\begin{proposition} Let $\lambda\in X_{F,+}$. Then $H^{0}(\lambda)^{U^{+}}\cong L_{\mathfrak f}(\lambda)$. 
\end{proposition}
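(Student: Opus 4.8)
The plan is to mimic the classical argument (as in \cite[II.2.2--2.3]{Jan}, \cite[Section 6]{BruKl}) showing that the $U^{+}$-invariants of $H^{0}(\lambda)$ are a single copy of the simple $F$-module $L_{\mathfrak f}(\lambda)$. First I would use the triangular decomposition ${\mathfrak g}={\mathfrak u}\oplus{\mathfrak f}\oplus{\mathfrak u}^{+}$ to observe that $B=F\ltimes U$ and that $U^{+}$ is the ``opposite'' unipotent subgroup scheme. The key input is the ``big cell'' statement: the multiplication map $U^{+}\times B\to G$ is an open immersion onto a dense open subscheme $\Omega\subseteq G$ which is $U^{+}$-stable on the left and $B$-stable on the right. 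This should follow from the corresponding statement for $G_{\0}$ (where $\Omega_{\0}=U_{\0}^{+}B_{\0}$ is the big cell) together with the fact that on the level of distribution algebras $U({\mathfrak g})\cong U({\mathfrak u}^{+})\otimes U({\mathfrak f})\otimes U({\mathfrak u})$, so that $k[G]$ restricted to $\Omega$ is free over $k[B]$ with ``coordinate ring of $U^{+}$'' as a complement — i.e.\ $k[\Omega]\cong k[U^{+}]\otimes k[B]$ as a $(U^{+},B)$-bimodule.

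Granting this, I would realize $H^{0}(\lambda)=\operatorname{ind}_{B}^{G}L_{\mathfrak f}(\lambda)=(L_{\mathfrak f}(\lambda)\otimes k[G])^{B}$ and restrict the evaluation-at-$\Omega$ map to the $U^{+}$-fixed points. Since $\Omega$ is dense and $G/B$ is (super)separated, restriction $H^{0}(\lambda)=(L_{\mathfrak f}(\lambda)\otimes k[G])^{B}\hookrightarrow(L_{\mathfrak f}(\lambda)\otimes k[\Omega])^{B}$ is injective, and using $k[\Omega]\cong k[U^{+}]\otimes k[B]$ together with $(L_{\mathfrak f}(\lambda)\otimes k[B])^{B}\cong L_{\mathfrak f}(\lambda)$ (Frobenius reciprocity / the tensor identity for $B$) I get $(L_{\mathfrak f}(\lambda)\otimes k[\Omega])^{B}\cong L_{\mathfrak f}(\lambda)\otimes k[U^{+}]$ as a $U^{+}$-module, where $U^{+}$ acts only on the $k[U^{+}]$ factor by right translation. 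Taking $U^{+}$-invariants of the right regular representation on $k[U^{+}]$ gives just the constants (here one uses that $U^{+}$ is a unipotent supergroup scheme, so $k[U^{+}]^{U^{+}}=k$), and hence $H^{0}(\lambda)^{U^{+}}\subseteq(L_{\mathfrak f}(\lambda)\otimes k[\Omega])^{B,\,U^{+}}\cong L_{\mathfrak f}(\lambda)$. For the reverse inclusion, the constant functions on $\Omega$ coming from $1\in k[U^{+}]$ together with $L_{\mathfrak f}(\lambda)$ extend to genuine global sections: these are exactly the image of the canonical $B$-module map $L_{\mathfrak f}(\lambda)\hookrightarrow\operatorname{ind}_{B}^{G}L_{\mathfrak f}(\lambda)$ restricted to $\Omega$... more precisely, $\operatorname{Hom}_{G}(V,H^{0}(\lambda))=\operatorname{Hom}_{B}(V,L_{\mathfrak f}(\lambda))$ shows the unit $L_{\mathfrak f}(\lambda)\to H^{0}(\lambda)|_{B}$ is nonzero since $\lambda\in X_{F,+}$ means $H^{0}(\lambda)\neq 0$ and its $B$-socle must map onto $L_{\mathfrak f}(\lambda)$; this copy lands in the $U^{+}$-invariants because $U^{+}$ acts trivially on the corresponding sections supported at the identity coset of $\Omega$.

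The main obstacle I anticipate is making the ``big cell'' / freeness statement $k[\Omega]\cong k[U^{+}]\otimes k[B]$ rigorous in the super setting and checking it is compatible with the $B$- and $U^{+}$-actions, including the odd directions: one needs that $\Omega$ is genuinely an open sub-supergroup-scheme and that the factorization $\Omega\cong U^{+}\times B$ of schemes is $(U^{+},B)$-equivariant. This is where the hypothesis that $G$ is classical simple and not of type $P$ enters — the triangular decomposition of ${\mathfrak g}$ and the corresponding supergroup schemes $U^{+},F,U$ from Section~\ref{S:simplemodules} are exactly what is needed. A secondary (but routine) point is verifying $k[U^{+}]^{U^{+}}=k$ for the relevant unipotent supergroup scheme; this follows from the already-noted fact that the only finite-dimensional simple $U^{+}$-module is ${\mathbb C}$, combined with $k[U^{+}]$ being a union of finite-dimensional $U^{+}$-submodules each of which, being unipotent, has a trivial submodule, and a filtration argument showing the invariants cannot exceed dimension one. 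Once the big-cell decomposition is in hand, the rest is a short diagram chase exactly parallel to the reductive-group proof.
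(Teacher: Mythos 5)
Your first half (the injectivity $H^{0}(\lambda)^{U^{+}}\hookrightarrow L_{\mathfrak f}(\lambda)$ via restriction to the super big cell $\Omega\cong U^{+}\times B$) is essentially the paper's route: the paper simply cites the argument of \cite[II.2.2]{Jan}, applied to the evaluation map $\epsilon_{M}\colon \operatorname{ind}_{B}^{G}M\to M$, to get that $[\operatorname{ind}_{B}^{G}M]^{U^{+}}\hookrightarrow M$ as $F$-modules; your big-cell discussion is just that argument written out, and the super version of the open immersion is the same adaptation the paper tacitly assumes. The genuine problem is your "reverse inclusion." Frobenius reciprocity $\operatorname{Hom}_{G}(V,H^{0}(\lambda))\cong\operatorname{Hom}_{B}(V,L_{\mathfrak f}(\lambda))$ does \emph{not} produce a $B$-module map $L_{\mathfrak f}(\lambda)\to H^{0}(\lambda)$; the canonical map of the adjunction goes the other way (it is exactly $\epsilon$), and since $B$ contains the unipotent part $U$ its module category is not semisimple, so no such splitting exists a priori. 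Likewise, the assertion that the sections on $\Omega$ corresponding to $L_{\mathfrak f}(\lambda)\otimes 1\subseteq L_{\mathfrak f}(\lambda)\otimes k[U^{+}]$ "extend to genuine global sections" is precisely the nontrivial point (in the classical case the analogous extension statement encodes dominance of $\lambda$), and "sections supported at the identity coset" is not a meaningful notion here; so this step, as written, fails.

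The fix is both shorter and is what the paper actually does: the injection you constructed is the evaluation map restricted to $U^{+}$-invariants, hence is a homomorphism of $F$-modules (since $F$ normalizes $U^{+}$, the fixed-point space is $F$-stable and $\epsilon$ is $B$-, in particular $F$-, equivariant). Because $\lambda\in X_{F,+}$ means $H^{0}(\lambda)\neq 0$, and $U^{+}$ is unipotent (its only simple module is ${\mathbb C}$), one has $H^{0}(\lambda)^{U^{+}}\neq 0$. Thus the image of the injection is a nonzero $F$-submodule of the simple $F$-module $L_{\mathfrak f}(\lambda)$, hence everything, giving $H^{0}(\lambda)^{U^{+}}\cong L_{\mathfrak f}(\lambda)$. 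In short: you never need to embed $L_{\mathfrak f}(\lambda)$ into $H^{0}(\lambda)$; you need only nonvanishing of the $U^{+}$-fixed points plus $F$-simplicity of the target, neither of which your proposal invokes correctly.
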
 

\begin{proof} We first consider a more general idea about induction. Let $M$ be a rational $B$-module and let $\epsilon_{M}:\text{ind}_{B}^{G}M\rightarrow M$ be the evaluation 
homomorphism. Using the same proof in \cite[II Proposition 2.2 and (1)]{Jan}, one can show that $[\text{ind}_{B}^{G}M]^{U^{+}}\hookrightarrow M$ under $\epsilon_{M}$. This is a monomorphism of 
$F$-modules. 

Now apply this to the case when $M=L_{\mathfrak f}(\lambda)$. The statement of the proposition now follows since $L_{\mathfrak f}(\lambda)$ is simple as an $F$-module
and the $U^{+}$-fixed points $H^{0}(\lambda)$ cannot be zero for $\lambda\in X_{F,+}$. 
\end{proof}

\subsection{} We can now give a parametrization of simple $G$-modules.

\begin{theorem} Let $G$ be a classical simple algebraic subgroup scheme. Then 
there is a $1$-$1$ correspondence between simple $G$-modules and $X_{F,+}$ given by 
$L(\lambda)=\operatorname{soc}_{G}H^{0}(\lambda)$. 
\end{theorem}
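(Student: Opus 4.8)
The plan is to mimic the classical argument from \cite[II.2.3–2.4]{Jan}, using the preceding proposition as the key structural input. First I would establish that $\operatorname{soc}_{G}H^{0}(\lambda)$ is simple (or zero) for every $\lambda\in X_{F}$, and nonzero precisely when $\lambda\in X_{F,+}$. The nonzero claim is immediate: by definition $\lambda\in X_{F,+}$ means $H^{0}(\lambda)\neq 0$, and since $H^{0}(\lambda)$ is a finite-dimensional rational $G$-module (as noted after Theorem~\ref{T:spectralseq1}, $R^{n}\operatorname{ind}_{B}^{G}$ preserves finite-dimensionality), it has a nonzero socle. For simplicity of the socle, suppose $L$ and $L'$ are two simple $G$-submodules of $H^{0}(\lambda)$. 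Applying the previous proposition, $H^{0}(\lambda)^{U^{+}}\cong L_{\mathfrak f}(\lambda)$ is \emph{simple} as an $F$-module, hence one-dimensional over the relevant local structure — in any case, both $L^{U^{+}}$ and $(L')^{U^{+}}$ are nonzero (as $U^{+}$ is unipotent, every nonzero $G$-submodule has nonzero $U^{+}$-fixed points) and both sit inside the simple $F$-module $L_{\mathfrak f}(\lambda)$, forcing $L^{U^{+}}=(L')^{U^{+}}=H^{0}(\lambda)^{U^{+}}$. Then $L\cap L'\supseteq$ a common nonzero subspace, so $L\cap L'\neq 0$, and since both are simple, $L=L'$. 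This shows $\operatorname{soc}_{G}H^{0}(\lambda)$ is a single simple module $L(\lambda)$.

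Next I would verify that the assignment $\lambda\mapsto L(\lambda)$ is surjective onto isomorphism classes of simple $G$-modules. Given a finite-dimensional simple $G$-module $L$, the discussion in the subsection preceding the Proposition shows $L\hookrightarrow H^{0}(\lambda)$ for some $\lambda\in X_{F}$ (choose $\lambda$ with $\operatorname{Hom}_{B}(L,L_{\mathfrak f}(\lambda))\neq 0$, which exists because $L$ restricted to $B$ has some simple $F$-quotient after killing the $U$-action; then use Frobenius reciprocity). This $\lambda$ lies in $X_{F,+}$ since $H^{0}(\lambda)\neq 0$, and then $L\subseteq\operatorname{soc}_{G}H^{0}(\lambda)=L(\lambda)$, so $L\cong L(\lambda)$.

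Finally, injectivity: if $L(\lambda)\cong L(\mu)$ for $\lambda,\mu\in X_{F,+}$, I would recover $\lambda$ from $L(\lambda)$ intrinsically. Taking $U^{+}$-fixed points, $L(\lambda)^{U^{+}}=\operatorname{soc}_{G}H^{0}(\lambda)^{U^{+}}$; since $\operatorname{soc}_{G}H^{0}(\lambda)$ meets $H^{0}(\lambda)^{U^{+}}$ nontrivially and the latter is the simple $F$-module $L_{\mathfrak f}(\lambda)$, in fact $L(\lambda)^{U^{+}}=L_{\mathfrak f}(\lambda)$ as an $F$-module. Hence $L(\lambda)\cong L(\mu)$ as $G$-modules gives $L_{\mathfrak f}(\lambda)\cong L_{\mathfrak f}(\mu)$ as $F$-modules, and by the parametrization of simple $F$-modules (so that distinct $\lambda\in X_{F}$ give non-isomorphic $L_{\mathfrak f}(\lambda)$) we conclude $\lambda=\mu$.

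The main obstacle I anticipate is the simplicity-of-socle step, specifically the claim that two nonzero $G$-submodules of $H^{0}(\lambda)$ must intersect. In the reductive group case this rests on the highest-weight space being one-dimensional; here $L_{\mathfrak f}(\lambda)$ can be two-dimensional (as the $GL(1|1)$ and $Q(1)$ examples show), so the argument must instead use that $L_{\mathfrak f}(\lambda)$ is \emph{simple as an $F$-module} and that $U^{+}$-fixed points define an exact-enough functor on the relevant submodule lattice — concretely, that $(L\cap L')^{U^{+}} = L^{U^{+}}\cap (L')^{U^{+}}$ and that a nonzero $G$-submodule has nonzero $U^{+}$-fixed points. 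The latter uses unipotence of $U^{+}$ together with the fact that every $G$-submodule of the finite-dimensional module $H^{0}(\lambda)$ is finite-dimensional and rational over $U^{+}$. I would need to check that the proof of \cite[II.2.2]{Jan} adapts to give $[\operatorname{ind}_{B}^{G}M]^{U^{+}}\hookrightarrow M$ in the super setting with $U^+$ unipotent, which the cited Proposition already asserts, so the remaining work is purely the lattice-theoretic bookkeeping above.
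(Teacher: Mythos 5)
Your proposal is correct and follows essentially the same route as the paper: it rests on the preceding proposition $H^{0}(\lambda)^{U^{+}}\cong L_{\mathfrak f}(\lambda)$, the unipotence of $U^{+}$ (nonzero submodules have nonzero fixed points), and Frobenius reciprocity for surjectivity, recovering $\lambda$ from $L(\lambda)^{U^{+}}$ for injectivity. The only cosmetic difference is that you derive simplicity of the socle from a nonzero intersection of two putative simple submodules, whereas the paper phrases it as a contradiction from embedding $L_{1}^{U^{+}}\oplus L_{2}^{U^{+}}$ into the simple $F$-module $L_{\mathfrak f}(\lambda)$.
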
 

\begin{proof} First we need to show that if $\lambda\in X_{F,+}$ then $\text{soc}_{G}H^{0}(\lambda)$ is simple. This can easily be seen because if 
$L_{1}$ and $L_{2}$ are simple $G$-modules with $L_{1}\oplus L_{2}\hookrightarrow H^{0}(\lambda)$, then one can take $U^{+}$-fixed points to get 
a monomorphism of $F$-modules: $L_{1}^{U^{+}}\oplus L_{2}^{U^{+}}\hookrightarrow L_{\mathfrak f}(\lambda)$. Since $U_{+}$-fixed points on 
$L_{j}$, $j=1,2$ are non-trivial and $L_{\mathfrak f}(\lambda)$ is a simple $F$-module, one obtains a contradiction. 

Let $L=\text{soc}_{G}H^{0}(\lambda)$. Then $L^{U^{+}}\cong L_{\mathfrak f}(\lambda)$. This shows that the socles of $H^{0}(\lambda)$ and $H^{0}(\mu)$ where 
$\lambda,\mu\in X_{+}$ are not isomorphic unless $\lambda=\mu$. Therefore, for $\lambda\in X_{+}$, one can set $L(\lambda)=\text{soc}_{G}H^{0}(\lambda)$ to 
obtain the desired bijective correspondence.  
\end{proof} 

\subsection{} We will now indicate how one can parametrize the simple modules using this setup for $G=Q(n)$. Let $M$ be a $G$-module and $M=\oplus_{\mu \in X}M_{\mu}$ be its 
weight space decomposition. We have ${\mathfrak f}={\mathfrak f}_{\0}\oplus {\mathfrak f}_{\1}$ with ${\mathfrak f}_{\0}\cong {\mathfrak t}$, and $[{\mathfrak f}_{\0},{\mathfrak f}_{\1}]=0$. 
This implies that the weight space $M_{\mu}$ is an $F$-module. 

Now let $M$ be a simple $Q(n)$-module. Then for some $\lambda\in X_{F}$, 
$$0\neq \text{Hom}_{G}(M,\text{ind}_{B}^{G} L_{\mathfrak f}(\lambda))=\text{Hom}_{B}(M,L_{\mathfrak f}(\lambda)).$$ 
It follows that $L_{\mathfrak f}({\lambda})$ has to appear in the head of $M$ as $B$-module and $\lambda$ must be the highest weight of $M$. The ordering is given by the 
roots $\Delta_{\0}=\{\epsilon_{1}-\epsilon_{2},\epsilon_{2}-\epsilon_{3},\dots,\epsilon_{n-1}-\epsilon_{n}\}$. Furthermore, 
$$0\neq\text{Hom}_{B}(M,L_{\mathfrak f}(\lambda))\subseteq \text{Hom}_{B_{\0}}(M,L_{\mathfrak f}(\lambda))=\text{Hom}_{G_{\0}}(M,\text{ind}_{B_{\0}}^{G_{\0}}L_{\mathfrak f}(\lambda)).$$ 
Now $L_{\mathfrak f}(\lambda)=\oplus \lambda$ as a $B_{\0}$-module, so it follows that $\lambda$ must be in $(X_{\0})_{+}$ (i.e., it is a dominant integral weight). The upshot of this analysis is that 
$L(\lambda)=\text{soc}_{G}H^{0}(\lambda)$ where $\lambda\in (X_{\0})_{+}$ and $\lambda$ is the highest weight of $L(\lambda)$ (cf. \cite[Theorem 4.18]{Bru2}). 

For $G$ not of type $Q$ the weight spaces no longer yield $F$-modules, so this analysis will not work. An interesting problem would be to provide explicit parameterizations of simple modules involving weights 
for the other classical simple Lie superalgebras. Moreover, once one has an explicit parametrization, an interesting problem would be to develop a theory of  decomposition numbers (e.g., 
$[H^{0}(\lambda):L(\mu)]$ for $\lambda,\mu \in X_{F,+}$).  

\section{Generic Behavior for BBW Parabolics} 

\subsection{Redux: GGNW Computations} Assume throughout this section that ${\mathfrak g}$ is a classical simple Lie superalgebra not of type $P$. Furthermore, let $G$ be a supergroup scheme with ${\mathfrak g}=\text{Lie }G$, and $B$ a BBW parabolic subgroup of $G$. Set  
$$p_{G,B}(t)=\sum_{i=0}^{\infty} \dim R^{i}\text{ind}_{B}^{G} {\mathbb C}\ t^{i}.$$
For the detecting subalgebra ${\mathfrak f}$ associated to ${\mathfrak b}$, there is an isomorphism of rings given by the restriction map:
$$S^{\bullet}({\mathfrak g}_{\1}^{*})^{G_{\0}}\cong S^{\bullet}({\mathfrak f}_{\1}^{*})^{N}.$$
where $N$ is a reductive algebraic group. If $N_{0}$ is the connected component of the identity in $N$ then $W_{\1}=N/N_{0}$ is a finite reflection group. 
Let $p_{W_{\1}}(s)=\sum_{w\in W_{\1}}s^{l(w)}$ be the Poincar\'e polynomial for $W_{\1}$. 

A fundamental result in \cite[Sections 4.2-4.9]{GGNW} was the calculation of $R^{\bullet}\text{ind}_{B_{\0}}^{G_{\0}} \Lambda^{\bullet}(({\mathfrak g}_{\1}/{\mathfrak b}_{\1})^{*})$. It was shown that 
\begin{equation}
R^{n}\text{ind}_{B_{\0}}^{G_{\0}} \Lambda^{j}(({\mathfrak g}_{\1}/{\mathfrak b}_{\1})^{*})=0\ \ \text{for $n\neq j$.}
\end{equation}
Furthermore, in the case when $n=j$, $R^{n}\text{ind}_{B_{\0}}^{G_{\0}} \Lambda^{n}(({\mathfrak g}_{\1}/{\mathfrak b}_{\1})^{*})$ is a direct sum of 
trivial modules whose number is prescribed by $p_{W_{\1}}(s)$. These results in conjunction with Corollary~\ref{C:Gzeroiso} yield the calculation of  
$R^{\bullet}\text{ind}_{B}^{G} {\mathbb C}$ which is summarized below (cf. \cite[Theorem 4.10.1]{GGNW}).

\begin{theorem} \label{T:GGNWRedux} Let ${\mathfrak g}$ be a classical simple Lie superalgebra with ${\mathfrak g}=\operatorname{Lie }G$. 
Assume that ${\mathfrak g}$ is not isomorphic to $P(n)$. Let $B$ be the parabolic subgroup such that ${\mathfrak b}=\operatorname{Lie }B$ where ${\mathfrak b}$ is a 
BBW parabolic subalgebra. Then 
\begin{itemize} 
\item[(a)] $R^{\bullet}\operatorname{ind}_{B}^{G}{\mathbb C}$ is a direct sum of trivial modules. 
\item[(b)] The number of trivial modules in $R^{n}\operatorname{ind}_{B}^{G}{\mathbb C}$ is given by 
$$p_{G,B}(t)=p_{W_{\1}}(s)$$ 
where $s=t$ for $G$ is of type $Q$, and $s=t^{2}$ otherwise.  
\end{itemize} 
\end{theorem}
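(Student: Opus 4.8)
The plan is to derive the theorem by feeding the computation of $R^{\bullet}\operatorname{ind}_{B_{\0}}^{G_{\0}}\Lambda^{\bullet}((\g_{\1}/\mathfrak{b}_{\1})^{*})$ from \cite[Sections 4.2--4.9]{GGNW} into Corollary~\ref{C:Gzeroiso}. First I would apply Corollary~\ref{C:Gzeroiso}(b) with $P=B$ and $M=\C$: its hypothesis is precisely the vanishing $R^{n}\operatorname{ind}_{B_{\0}}^{G_{\0}}\Lambda^{j}((\g_{\1}/\mathfrak{b}_{\1})^{*})=0$ for $n\neq j$ established in \cite[Sections 4.2--4.9]{GGNW} (for each $i$, the distinguished degree ``$j$'' in the corollary is $i$), so the corollary yields, for all $n\geq 0$,
$$(R^{n}\operatorname{ind}_{B}^{G}\C)|_{G_{\0}}\;\cong\;R^{n}\operatorname{ind}_{B_{\0}}^{G_{\0}}[\Lambda^{\bullet}((\g_{\1}/\mathfrak{b}_{\1})^{*})]\;=\;R^{n}\operatorname{ind}_{B_{\0}}^{G_{\0}}\Lambda^{n}((\g_{\1}/\mathfrak{b}_{\1})^{*}),$$
the last equality using additivity of $R^{n}\operatorname{ind}_{B_{\0}}^{G_{\0}}$ on the finite direct sum $\Lambda^{\bullet}=\bigoplus_{j}\Lambda^{j}$ together with the same vanishing to kill every summand with $j\neq n$. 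By the remark following Theorem~\ref{T:spectralseq1}, each $R^{n}\operatorname{ind}_{B}^{G}\C$ is finite dimensional, so all the modules in sight are finite dimensional.

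Next I would invoke the remaining half of the GGNW computation: for $n=j$ the $G_{\0}$-module $R^{n}\operatorname{ind}_{B_{\0}}^{G_{\0}}\Lambda^{n}((\g_{\1}/\mathfrak{b}_{\1})^{*})$ is a direct sum of copies of the trivial module, and the number of copies is the coefficient of $t^{n}$ in $p_{W_{\1}}(s)$ under the substitution $s=t$ when $G$ is of type $Q$ and $s=t^{2}$ otherwise. This substitution is exactly the bookkeeping in \cite[Sections 4.2--4.9]{GGNW} relating the cohomological degree to the length of the associated element of $W_{\1}$ (the degree being equal to the length for $\mathfrak{q}(n)$ and to twice the length in the remaining cases, since there $\g_{\1}$ contributes two copies of the relevant $G_{\0}$-representation rather than one). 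Substituting into the displayed isomorphism and summing dimensions against $t^{n}$ gives
$$p_{G,B}(t)=\sum_{n\geq 0}\dim R^{n}\operatorname{ind}_{B}^{G}\C\cdot t^{n}=\sum_{n\geq 0}\dim R^{n}\operatorname{ind}_{B_{\0}}^{G_{\0}}\Lambda^{n}((\g_{\1}/\mathfrak{b}_{\1})^{*})\cdot t^{n}=p_{W_{\1}}(s),$$
which is part (b), and it simultaneously shows that each $R^{n}\operatorname{ind}_{B}^{G}\C$ is a trivial $G_{\0}$-module.

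It remains to upgrade ``trivial $G_{\0}$-module'' to ``direct sum of trivial $G$-modules'', i.e., part (a); this is the step I expect to be the main obstacle, precisely because Corollary~\ref{C:Gzeroiso} only controls the restriction to $G_{\0}$. This is \cite[Theorem 4.10.1]{GGNW}; alternatively one argues directly. Set $V=R^{n}\operatorname{ind}_{B}^{G}\C$, so $\g_{\0}\cdot V=0$. For $x,y\in\g_{\1}$ the identity $(xy+yx)v=[x,y]\cdot v=0$ shows $\g_{\1}$ acts on $V$ through the finite-dimensional exterior algebra $\Lambda(\g_{\1})$; combining $\g_{\0}\cdot V=0$ with $G_{\0}$-equivariance of the $\g_{\1}$-action gives $(x-g\cdot x)\cdot v=0$ for all $g\in G_{\0}$, $x\in\g_{\1}$, $v\in V$. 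The vectors $x-g\cdot x$ span a $G_{\0}$-submodule of $\g_{\1}$ whose quotient is a trivial $G_{\0}$-module, hence this submodule is all of $\g_{\1}$ provided $\g_{\1}$ has no nonzero trivial $G_{\0}$-summand; in that case $\g_{\1}\cdot V=0$, so $V$ is a trivial $G$-module, i.e., a direct sum of copies of $\C$, which is part (a). The only classical simple Lie superalgebra for which $\g_{\1}$ does carry a trivial $G_{\0}$-summand is $\mathfrak{q}(n)$ (the line spanned by the odd analogue of the identity matrix, which acts on $V$ with square zero); the vanishing of its action on $V$ in the queer case is part of \cite[Theorem 4.10.1]{GGNW}.
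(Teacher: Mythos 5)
Your proposal follows essentially the same route as the paper: the paper gives no independent proof of this theorem but obtains it exactly as you do, by feeding the vanishing $R^{n}\operatorname{ind}_{B_{\0}}^{G_{\0}}\Lambda^{j}(({\mathfrak g}_{\1}/{\mathfrak b}_{\1})^{*})=0$ for $n\neq j$ and the multiplicity count from \cite[Sections 4.2--4.9]{GGNW} into Corollary~\ref{C:Gzeroiso} and citing \cite[Theorem 4.10.1]{GGNW} for the $G$-module statement. Your extra direct argument upgrading ``trivial over $G_{\0}$'' to ``trivial over $G$'' outside type $Q$ is a correct and welcome refinement, and your deferral to \cite{GGNW} for the action of the odd trivial summand in the queer case matches what the paper itself does.
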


\subsection{An Analog of Kempf's Vanishing Theorem} Let $T_{\0}$ be a maximal torus in $G_{\0}$, $X=X(T_{\0})$ and $(X_{\0})_{+}$ the dominant integral weights. 
The Weyl group of $G_{\0}$ is denoted by $W_{\0}$ with identity element $1\in W_{\0}$. 

Moreover, let $V$ be a $T_{\0}$-module and $V=\oplus_{\gamma\in X} V_{\gamma}$ be its weight space decomposition. Set $\text{wt}(V)=\{\gamma\in X:\ V_{\gamma}\neq 0\}$ (i.e., the set of weights of $V$). We start off this section by stating a key definition. 

\begin{definition} Let $\lambda\in X$ and $w\in W_{\0}$. 
\begin{itemize} 
\item[(a)] The weight $\lambda$ is {\em very dominant} if $\mu+\sigma\in X_{+}$ for all $\mu\in \operatorname{wt}(L_{\mathfrak f}(\lambda))$ and 
$\sigma\in \operatorname{wt}(\Lambda^{\bullet}(({\mathfrak g}/{\mathfrak b})^{*}))$. 
\item[(b)] The set of very dominant weight will be denoted by $X_{++}$.
\item[(c)] Set $\Gamma(\lambda,w)=\operatorname{wt}(L_{\mathfrak f}(\lambda)\otimes w^{-1}\Lambda^{\bullet}(({\mathfrak g}/{\mathfrak b})^{*}))$. For 
$\gamma\in \Gamma(\lambda,w)$, let $m_{\gamma,w}$ be the multiplicity of the weight $\gamma$ in $L_{\mathfrak f}(\lambda)\otimes w^{-1}\Lambda^{\bullet}(({\mathfrak g}/{\mathfrak b})^{*})$. 
\end{itemize} 
\end{definition} 
 As a consequence of Theorem~\ref{T:spectralseq1}, we can provide a criterion for the vanishing of the higher sheaf cohomology groups for weights that are 
very dominant. 

\begin{theorem}  \label{T:Kempf} Let $\lambda\in X_{++}$, and $1$ be the identity element in $W_{\0}$. Then 
\begin{itemize}
\item[(a)] $R^{n}\operatorname{ind}_{B}^{G} L_{\mathfrak f}(\lambda)=0$ for $n>0$. 
\item[(b)] $\operatorname{ind}_{B}^{G} L_{\mathfrak f}(\lambda)|_{G_{\0}}\cong \oplus_{\gamma\in \Gamma(\lambda,1)} [\operatorname{ind}_{B_{\0}}^{G_{\0}} \gamma]^{\oplus m_{\gamma,1}}$ as a $G_{\0}$-module. 
\end{itemize}
\end{theorem}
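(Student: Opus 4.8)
The plan is to apply Theorem~\ref{T:spectralseq1} with $H = B$, so that the $E_1$-page reads
$$E_1^{i,j} = R^{i+j}\operatorname{ind}_{B_{\0}}^{G_{\0}}\!\left[L_{\mathfrak f}(\lambda)\otimes \Lambda^{j}(({\mathfrak g}_{\1}/{\mathfrak b}_{\1})^{*})\right] \Rightarrow \left[R^{i+j}\operatorname{ind}_{B}^{G} L_{\mathfrak f}(\lambda)\right]\big|_{G_{\0}},$$
and to show that under the very dominant hypothesis this spectral sequence degenerates and feeds only the zeroth total-degree. The first step is to reduce the $E_1$-terms to ordinary induction of one-dimensional weight modules: decompose $L_{\mathfrak f}(\lambda)\otimes\Lambda^{j}(({\mathfrak g}_{\1}/{\mathfrak b}_{\1})^{*})$ as a $B_{\0}$-module. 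Since $U_{\0}^{+}$ acts trivially after taking cohomology in the relevant range (one only needs the $B_{\0}$-module structure, and $B_{\0} = F_{\0}\ltimes U_{\0}$ with $U_{\0}$ unipotent), the computation of $R^{\bullet}\operatorname{ind}_{B_{\0}}^{G_{\0}}$ on each such tensor product is controlled by the weights appearing, which lie in $\operatorname{wt}(L_{\mathfrak f}(\lambda)\otimes\Lambda^{\bullet}(({\mathfrak g}_{\1}/{\mathfrak b}_{\1})^{*}))\subseteq \operatorname{wt}(L_{\mathfrak f}(\lambda))+\operatorname{wt}(\Lambda^{\bullet}(({\mathfrak g}/{\mathfrak b})^{*}))$.

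The key step is Kempf vanishing for the reductive group $G_{\0}$: every weight $\gamma$ occurring is of the form $\mu+\sigma$ with $\mu\in\operatorname{wt}(L_{\mathfrak f}(\lambda))$ and $\sigma\in\operatorname{wt}(\Lambda^{\bullet}(({\mathfrak g}/{\mathfrak b})^{*}))$, hence $\gamma\in X_{+}$ by the very dominant hypothesis. Filtering the $B_{\0}$-module $L_{\mathfrak f}(\lambda)\otimes\Lambda^{j}(({\mathfrak g}_{\1}/{\mathfrak b}_{\1})^{*})$ by a sequence of one-dimensional subquotients (each given by a dominant weight $\gamma$), the long exact sequences in $R^{\bullet}\operatorname{ind}_{B_{\0}}^{G_{\0}}$ together with classical Kempf vanishing ($R^{n}\operatorname{ind}_{B_{\0}}^{G_{\0}}\gamma = 0$ for $n>0$, $\gamma\in X_{+}$) force $R^{n}\operatorname{ind}_{B_{\0}}^{G_{\0}}[L_{\mathfrak f}(\lambda)\otimes\Lambda^{j}(({\mathfrak g}_{\1}/{\mathfrak b}_{\1})^{*})] = 0$ for all $n>0$ and all $j$. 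Consequently $E_1^{i,j} = 0$ unless $i+j = 0$, i.e. unless $j = -i$; combined with $j\geq 0$ this gives $E_1^{i,j}=0$ unless $i=j=0$... more precisely $E_1^{i,j}=0$ unless $i+j=0$. Since the differentials $d_r$ have bidegree $(r,1-r)$, the spectral sequence collapses at $E_1$, and one reads off $[R^{n}\operatorname{ind}_{B}^{G}L_{\mathfrak f}(\lambda)]|_{G_{\0}} = 0$ for $n>0$, proving (a), while in degree zero $[\operatorname{ind}_{B}^{G}L_{\mathfrak f}(\lambda)]|_{G_{\0}} \cong \bigoplus_{j\geq 0}\operatorname{ind}_{B_{\0}}^{G_{\0}}[L_{\mathfrak f}(\lambda)\otimes\Lambda^{j}(({\mathfrak g}_{\1}/{\mathfrak b}_{\1})^{*})] = \operatorname{ind}_{B_{\0}}^{G_{\0}}[L_{\mathfrak f}(\lambda)\otimes\Lambda^{\bullet}(({\mathfrak g}_{\1}/{\mathfrak b}_{\1})^{*})]$.

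For part (b), it remains to identify this with $\bigoplus_{\gamma\in\Gamma(\lambda,1)}[\operatorname{ind}_{B_{\0}}^{G_{\0}}\gamma]^{\oplus m_{\gamma,1}}$. Here I would argue that since all weights $\gamma$ of the $B_{\0}$-module $M := L_{\mathfrak f}(\lambda)\otimes\Lambda^{\bullet}(({\mathfrak g}_{\1}/{\mathfrak b}_{\1})^{*})$ are dominant and $w = 1$ so $\Gamma(\lambda,1) = \operatorname{wt}(M)$ (up to the sign/realization conventions relating $({\mathfrak g}/{\mathfrak b})^{*}$ to $({\mathfrak g}_{\1}/{\mathfrak b}_{\1})^{*}$, which is exactly the even-odd bookkeeping one must pin down), each one-dimensional subquotient $\gamma$ of $M$ satisfies $R^{>0}\operatorname{ind}_{B_{\0}}^{G_{\0}}\gamma = 0$, so the functor $\operatorname{ind}_{B_{\0}}^{G_{\0}}$ is exact on the filtration of $M$; hence $\operatorname{ind}_{B_{\0}}^{G_{\0}}M$ has a filtration with subquotients $\operatorname{ind}_{B_{\0}}^{G_{\0}}\gamma$, and since these are the costandard modules $H^{0}_{G_{\0}}(\gamma)$ for dominant $\gamma$ and the statement only asserts a $G_{\0}$-module isomorphism (character equality suffices, or one invokes that a module with a good filtration and the right character is determined), the direct sum decomposition follows. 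The main obstacle I anticipate is the careful matching of conventions between $\Lambda^{\bullet}(({\mathfrak g}_{\1}/{\mathfrak b}_{\1})^{*})$ (which appears in the spectral sequence) and $\Lambda^{\bullet}(({\mathfrak g}/{\mathfrak b})^{*})$ together with the $w^{-1}$-twist (which appears in the definition of $\Gamma(\lambda,w)$ and $m_{\gamma,w}$), i.e. verifying that for $w=1$ these genuinely give the same weight multiset $\Gamma(\lambda,1)$ with the same multiplicities $m_{\gamma,1}$; this is bookkeeping rather than a genuine difficulty, but it is where the proof needs to be written with care.
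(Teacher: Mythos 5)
Your proposal is correct and follows essentially the same route as the paper: apply Theorem~\ref{T:spectralseq1} with $H=B$ and $M=L_{\mathfrak f}(\lambda)$, use the very dominant hypothesis together with classical Kempf vanishing for $G_{\0}$ to kill all $E_{1}$-terms of positive total degree, and deduce (a) by degeneration and (b) from the vanishing of $R^{1}\operatorname{ind}_{B_{\0}}^{G_{\0}}\gamma$ for the dominant weights $\gamma\in\Gamma(\lambda,1)$. Your filtration-by-one-dimensional $B_{\0}$-subquotients argument merely spells out the paper's one-line assertion that $E_{1}^{i,j}=0$ for $i+j>0$, and the $({\mathfrak g}/{\mathfrak b})^{*}$ versus $({\mathfrak g}_{\1}/{\mathfrak b}_{\1})^{*}$ bookkeeping you flag is an imprecision in the paper's own notation rather than a gap in your argument.
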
 

\begin{proof} One can apply the spectral sequence in Theorem~\ref{T:spectralseq1} with $H=B$ and $M=L_{\mathfrak f}(\lambda)$. Under the condition that 
$\lambda\in X_{++}$, one has $E_{1}^{i,j}=0$ for $i+j>0$. Therefore, the spectral sequence degenerates and yields part (a). Part (b) follows because under the 
assumption that $\lambda\in X_{++}$, one has $R^{1}\text{ind}_{B_{\0}}^{G_{\0}} \gamma=0$ for all $\gamma\in \Gamma(\lambda,1)$. 
\end{proof} 

We can now illustrate how this theorem works for ${\mathfrak q}(n)$. 

\begin{example} Let ${\mathfrak g}={\mathfrak q}(n)$, $G=Q(n)$ and $B$ be a BBW parabolic subgroup. For $\lambda\in X$, 
$L_{\mathfrak f}(\lambda)\cong \lambda^{\oplus \dim L_{\mathfrak f}(\lambda)}$ (direct sum of copies of $\lambda$) as a $B_{\0}$-module.

First observe that $\lambda=0$ is not very dominant because $0\neq R^{1}\text{ind}_{B}^{G}{\mathbb C}=R^{1}\text{ind}_{B}^{G} \lambda$ by 
Theorem~\ref{T:GGNWRedux}. Let $\lambda\in X_{++}$. In this case, $\lambda\in X_{++}$ if and only if $\lambda+\sigma\in (X_{0})_{+}$ for all $\sigma\in \operatorname{wt}(\Lambda^{\bullet}(({\mathfrak g}/{\mathfrak b})^{*}))$. Since $\sigma$ can be zero, one has $X_{++}\subseteq (X_{\0})_{+}$. 

The weight $\sigma$ is a sum of distinct roots from the set $-\Phi_{\1}^{+}=\{\epsilon_{i}-\epsilon_{j}:\ 1\leq i<j \leq n\}$. Now the simple roots for $G_{\0}$ are given by 
$\Delta_{\0}=\{\alpha_{1},\dots,\alpha_{n}\}$ where $\alpha_{i}=\epsilon_{i}-\epsilon_{i+1}$ where $i=1,2,\dots,n-1$. The condition that
 $\lambda+\sigma\in (X_{0})_{+}$ is equivalent to $\langle \lambda+\sigma,\alpha^{\vee}\rangle \geq 0$ for $\alpha\in \Delta_{\0}$. 
 
 A direct calculation shows that $-\langle \sigma,\alpha^{\vee}\rangle\geq n+1$, and it follows that 
 $$\{\lambda\in X:\ n+1\leq \langle \lambda,\alpha^{\vee} \rangle\ \text{for all $\alpha\in \Delta_{\0}$}\} \subseteq X_{++} \subseteq (X_{\0})_{+}.$$ 
\end{example} 

\subsection{An Analog of the Bott-Borel-Weil Theorem} For $w\in W_{\0}$, recall that the dot action on $X$ is given by $w\cdot \lambda=w(\lambda+\rho)-\rho$. 
Let $\overline{C}_{\mathbb Z}$ for $G_{\0}$ be defined as in \cite[II. 5.5]{Jan}. 

For a given $w\in W_{\0}$, set 
$$\Omega(w)=\{\lambda\in X: \mu+w^{-1}\sigma\in \overline{C}_{\mathbb Z}\ \text{for all $\mu\in \operatorname{wt}(L_{\mathfrak f}(\lambda))$ and 
$\sigma\in \operatorname{wt}(\Lambda^{\bullet}(({\mathfrak g}_{\1}/{\mathfrak b}_{\1})^{*}))$} \}.$$
Observe that $\Omega(w)\subseteq \overline{C}_{\mathbb Z}$ for all $w\in W_{\0}$ since $0$ is a weight of $\Lambda^{\bullet}(({\mathfrak g}_{\1}/{\mathfrak b}_{\1})^{*})$. 
We say that a weight $\mu$ is a {\em generic weight} if and only if $\mu\in \cup_{w\in W_{\0}} w\cdot \Omega(w):=\Omega$. The set $\Omega$ will be called the set of 
generic weights. 

We can now prove a version of the Bott-Borel-Weil Theorem for generic weights. Note that this theorem encompasses  Theorem~\ref{T:Kempf} which coincides with how the ordinary BBW Theorem encompasses the classical Kempf's vanishing theorem (see \cite[II. Chapters 4 and 5]{Jan}).

\begin{theorem} \label{T:generic} Let $w\in W_{\0}$ and $w\cdot \lambda$ is a generic weight where $\lambda\in \Omega(w)$. Then 
\begin{equation*} 
(R^{n}\operatorname{ind}_{B}^{G} L_{\mathfrak f}(w\cdot \lambda))|_{G_{\0}} \cong \begin{cases} 
\bigoplus_{\gamma\in \Gamma(\lambda,w)} [\operatorname{ind}_{B_{\0}}^{G_{\0}} \gamma]^{\oplus m_{\gamma,w}} & \text{$n=l(w)$} \\
0 & \text{$n\neq l(w)$}.
\end{cases}
 \end{equation*} 
\end{theorem}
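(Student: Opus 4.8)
The plan is to mimic the proof of Theorem~\ref{T:Kempf}, but now feeding the Bott--Borel--Weil theorem for the reductive group $G_{\0}$ into the spectral sequence of Theorem~\ref{T:spectralseq1} in place of Kempf vanishing. First I would set $H = B$ and $M = L_{\mathfrak f}(w\cdot\lambda)$ in Theorem~\ref{T:spectralseq1}, obtaining
$$E_{1}^{i,j} = R^{i+j}\operatorname{ind}_{B_{\0}}^{G_{\0}}\big[L_{\mathfrak f}(w\cdot\lambda)\otimes \Lambda^{j}(({\mathfrak g}_{\1}/{\mathfrak b}_{\1})^{*})\big] \Rightarrow [R^{i+j}\operatorname{ind}_{B}^{G} L_{\mathfrak f}(w\cdot\lambda)]|_{G_{\0}}.$$
The $E_1$-term is a sum over the $B_{\0}$-weights of $L_{\mathfrak f}(w\cdot\lambda)\otimes\Lambda^{j}(({\mathfrak g}_{\1}/{\mathfrak b}_{\1})^{*})$; each such weight is of the form $w\cdot\mu' + \sigma$ for $\mu'$ a weight of (a $w^{-1}$-twist of) $L_{\mathfrak f}(\lambda)$ and $\sigma$ a weight of $\Lambda^{\bullet}$. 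The hypothesis $\lambda\in\Omega(w)$ is precisely designed so that, after translating by the $\rho$-shift and applying $w$, every one of these weights lands in the ``$w\cdot(\text{dominant region})$'' situation where the classical BBW theorem for $G_{\0}$ (cf.\ \cite[II.5.5]{Jan}) applies uniformly: for each such weight $\nu$ there is a unique cohomological degree, namely $l(w)$, in which $R^{n}\operatorname{ind}_{B_{\0}}^{G_{\0}}\nu$ is nonzero, and there it equals $\operatorname{ind}_{B_{\0}}^{G_{\0}}(w^{-1}\cdot\nu)$ up to the dot-action bookkeeping.

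Granting that, the $E_1$-page is concentrated on the single anti-diagonal $i+j = l(w)$: we get $E_{1}^{i,j} = 0$ unless $i+j = l(w)$. Since the differentials $d_r$ have bidegree $(r, 1-r)$ for $r\geq 1$, they all shift $i+j$ by $1$, so every differential into or out of the surviving anti-diagonal either originates from or lands in a zero group. Hence the spectral sequence collapses at $E_1$, giving
$$[R^{n}\operatorname{ind}_{B}^{G} L_{\mathfrak f}(w\cdot\lambda)]|_{G_{\0}} \cong \bigoplus_{i+j = l(w)} E_{1}^{i,j}$$
which vanishes for $n\neq l(w)$ and for $n = l(w)$ is $\bigoplus_{j} R^{l(w)}\operatorname{ind}_{B_{\0}}^{G_{\0}}[L_{\mathfrak f}(w\cdot\lambda)\otimes\Lambda^{j}(({\mathfrak g}_{\1}/{\mathfrak b}_{\1})^{*})]$. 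The last step is to rewrite this using the $E_1$-degree shift: applying the classical BBW isomorphism weight-by-weight identifies $R^{l(w)}\operatorname{ind}_{B_{\0}}^{G_{\0}}$ of a weight $\nu$ lying in the $w$-chamber with $\operatorname{ind}_{B_{\0}}^{G_{\0}}(w^{-1}\cdot\nu)$, and summing over all weights $\gamma \in \Gamma(\lambda,w)$ with multiplicity $m_{\gamma,w}$ (exactly as in the $w=1$ case of Theorem~\ref{T:Kempf}(b)) yields $\bigoplus_{\gamma\in\Gamma(\lambda,w)}[\operatorname{ind}_{B_{\0}}^{G_{\0}}\gamma]^{\oplus m_{\gamma,w}}$, as claimed.

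The main obstacle is the weight-by-weight application of the classical BBW theorem on $G_{\0}$: one must check that the defining condition of $\Omega(w)$ — namely $\mu + w^{-1}\sigma \in \overline{C}_{\mathbb Z}$ for all relevant $\mu,\sigma$ — really does force \emph{every} $B_{\0}$-weight $\nu$ appearing in $L_{\mathfrak f}(w\cdot\lambda)\otimes\Lambda^{j}(({\mathfrak g}_{\1}/{\mathfrak b}_{\1})^{*})$ to sit in the $W_{\0}$-orbit (under the dot action) of a weight in the interior-or-wall region $\overline{C}_{\mathbb Z}$ with the \emph{same} Weyl element $w$ realizing the minimal-length representative. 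In particular one needs that the $\rho$-shift and the decomposition $w\cdot\lambda$ interact correctly with the weights of $L_{\mathfrak f}$, which is where the hypothesis ``$w\cdot\lambda$ is generic'' (i.e.\ lies in $w\cdot\Omega(w)$) does the work; some care is required because $L_{\mathfrak f}(w\cdot\lambda)$ is a nontrivial $F$-module and its $B_{\0}$-weights form a nontrivial set rather than a single weight. I would isolate this as a short lemma (\emph{under the hypothesis $\lambda\in\Omega(w)$, for every weight $\nu$ of $L_{\mathfrak f}(w\cdot\lambda)\otimes w^{-1}\Lambda^{\bullet}$ one has $w^{-1}\cdot\nu$ dominant and $R^{n}\operatorname{ind}_{B_{\0}}^{G_{\0}}\nu = 0$ for $n\neq l(w)$}) and then the spectral-sequence collapse is formal.
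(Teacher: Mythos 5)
Your proposal follows the paper's own argument essentially verbatim: apply Theorem~\ref{T:spectralseq1} with $H=B$ and $M=L_{\mathfrak f}(w\cdot\lambda)$, use the classical Bott--Borel--Weil theorem for $G_{\0}$ (guaranteed by $\lambda\in\Omega(w)$) to concentrate the $E_{1}$-page on the anti-diagonal $i+j=l(w)$, and conclude via the bidegree $(r,1-r)$ collapse exactly as in Theorem~\ref{T:Kempf}. The weight-bookkeeping point you flag as the main obstacle is precisely the step the paper also treats tersely (it simply asserts the computation \eqref{eq:BBWcalc} for the relevant weights), so isolating it as a short lemma is a sensible refinement rather than a different route.
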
 

\begin{proof} Let $\mu+w^{-1}\sigma\in \overline{C}_{\mathbb Z}$ where $\mu$ is a weight of $L_{\mathfrak f}(w\cdot \lambda)$ and $\sigma$ a weight of $\Lambda^{\bullet}(({\mathfrak g}_{\1}/{\mathfrak b}_{\1})^{*})$. According to the  
ordinary BBW Theorem \cite[II 5.5 Corollary]{Jan}, one has 
\begin{equation} \label{eq:BBWcalc}
R^{n}\text{ind}_{B_{\0}}^{G_{\0}} w\cdot (\mu+w^{-1}\sigma)\cong  
\begin{cases} 0 & \text{if $n\neq l(w)$} \\
\text{ind}_{B_{\0}}^{G_{\0}} \mu+w^{-1}\sigma &\text{if $n=l(w)$}. 
\end{cases} 
\end{equation} 
Now apply the spectral sequence in Theorem~\ref{T:spectralseq1} with $H=B$ and $M=L_{\mathfrak f}(w\cdot \lambda)$. From (\ref{eq:BBWcalc}), it follows that 
$E_{1}^{i,j}=0$ for $i+j\neq l(w)$. One can now apply the same reasoning as  in the proof of Theorem~\ref{T:Kempf}, The spectral sequence degenerates and yields the desired result. 
\end{proof} 

\subsection{} Let $G=Q(n)$. Since $\text{wt}(L_{\mathfrak f}(\lambda))=\{\lambda\}$, one has for a given $w\in W_{\0}$, 
$$\Omega(w)=\{\lambda\in X: \lambda+w^{-1}\sigma\in \overline{C}_{\mathbb Z}\ \text{for all $\sigma\in \operatorname{wt}(\Lambda^{\bullet}(({\mathfrak g}_{\1}/{\mathfrak b}_{\1})^{*}))$} \}.$$
We now show that $\Omega$ can be obtained by translating $\Omega(1)$ by the ordinary action of the Weyl group $W_{\0}$.

\begin{lemma}
Let $\mathfrak{g}$ be a semisimple Lie algebra and let $\sigma$ be a sum of distinct negative roots of $\mathfrak{g}$.  Then for all $w$ in the Weyl group $W$ of $\mathfrak{g}$, $w \cdot \sigma$ is also a sum of distinct negative roots.
\end{lemma}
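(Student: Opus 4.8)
The plan is to translate the statement into the combinatorics of ``sections'' of the root system, and then to observe that $W$ permutes such sections. Throughout, $w\cdot\sigma=w(\sigma+\rho)-\rho$ denotes the dot action, $\Phi$ the root system of $\mathfrak g$, and $\Phi^{+}$ a fixed positive system, so that $2\rho=\sum_{\alpha\in\Phi^{+}}\alpha$. Call a subset $B\subseteq\Phi$ a \emph{section} if it contains exactly one of $\alpha,-\alpha$ for each root $\alpha$. The key elementary identity, which I would establish first, is that writing $\sigma=-\sum_{\beta\in A}\beta$ for a (unique) subset $A\subseteq\Phi^{+}$ of distinct positive roots gives
$$\sigma+\rho=\rho-\sum_{\beta\in A}\beta=\frac{1}{2}\sum_{\alpha\in\Phi^{+}\setminus A}\alpha-\frac{1}{2}\sum_{\beta\in A}\beta=\frac{1}{2}\sum_{\gamma\in B_{A}}\gamma,$$
where $B_{A}:=(\Phi^{+}\setminus A)\sqcup(-A)$. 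A direct check shows $B_{A}$ is a section.

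Next I would record the two facts that make the argument run. First, $W$ sends sections to sections: since $w$ permutes $\Phi$ and commutes with $\alpha\mapsto-\alpha$, the set $wB_{A}$ again meets each pair $\{\alpha,-\alpha\}$ in exactly one element. Second, the construction $A\mapsto B_{A}$ is a bijection between subsets of $\Phi^{+}$ and sections, with inverse $B'\mapsto A':=\{\beta\in\Phi^{+}:-\beta\in B'\}\subseteq\Phi^{+}$; reversing the displayed identity gives, for any section $B'$,
$$\frac{1}{2}\sum_{\gamma\in B'}\gamma=\rho-\sum_{\beta\in A'}\beta .$$

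Then I would combine these. Applying $w$ to the first display and using linearity,
$$w(\sigma+\rho)=\frac{1}{2}\sum_{\gamma\in B_{A}}w\gamma=\frac{1}{2}\sum_{\delta\in wB_{A}}\delta=\rho-\sum_{\beta\in A'}\beta ,$$
where $A':=\{\beta\in\Phi^{+}:-\beta\in wB_{A}\}$ is a subset of $\Phi^{+}$ by the second fact (applied to the section $B'=wB_{A}$). Subtracting $\rho$ yields $w\cdot\sigma=-\sum_{\beta\in A'}\beta$, which is a sum of distinct negative roots, as claimed.

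I do not expect a genuine obstacle here; the argument is short once the reformulation is in place. The only points that need a moment's care are verifying that $B_{A}$ and $wB_{A}$ really are sections (exactly one root from each $\pm$-pair) and that $A\mapsto B_{A}$ is a bijection, so that the output $A'$ is guaranteed to sit inside $\Phi^{+}$ rather than $\Phi$. I would also remark that the proof uses $\mathfrak g$ semisimple only through the identity $2\rho=\sum_{\alpha\in\Phi^{+}}\alpha$, so the same statement holds verbatim for reductive $\mathfrak g$ with $W$ its Weyl group.
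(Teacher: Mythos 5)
Your argument is correct, and it takes a different route from the paper's. The paper works directly with the inversion set of $w$: it writes $\sigma$ as a sum of $-\alpha_i$ with $\alpha_i\in A_w=\{\alpha\in\Phi^{+}:w\alpha\in\Phi^{-}\}$ and $-\beta_j$ with $\beta_j\in B_w=\{\beta\in\Phi^{+}:w\beta\in\Phi^{+}\}$, invokes the identity $w\cdot 0=\sum_{\alpha\in A_w}w(\alpha)$, and then cancels the unwanted positive terms $-w(\alpha_i)$ against that sum, checking by hand that the surviving terms are distinct negative roots. Your proof instead encodes ``$\rho$ plus a sum of distinct negative roots'' as ``half the sum over a section of $\Phi$'' (one root from each pair $\pm\alpha$), notes that $W$ permutes sections, and uses the bijection between sections and subsets of $\Phi^{+}$ to convert $w(\sigma+\rho)$ back; both proofs ultimately rest on $2\rho=\sum_{\alpha\in\Phi^{+}}\alpha$ and on $w$ permuting $\Phi$, but yours replaces the explicit cancellation and inversion-set bookkeeping by a bijective reformulation, which handles distinctness automatically, makes the ``reductive $\mathfrak g$'' generalization transparent, and in fact exhibits the set of weights of the form $\rho+(\text{sum of distinct negative roots})$ as exactly the $W$-stable set of half-section-sums; the paper's version, on the other hand, stays in the language of $w\cdot 0$ and inversion sets, which is the bookkeeping reused in its surrounding Bott--Borel--Weil computations. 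One inessential slip: the subset $A\subseteq\Phi^{+}$ with $\sigma=-\sum_{\beta\in A}\beta$ need not be unique (already in type $A_2$ one has $\alpha_1+\alpha_2$ equal to the highest root, giving two such subsets), but you never use uniqueness, so any choice of $A$ makes your argument go through.
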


\begin{proof}
Let $A_w$ be the set of all roots $\alpha$ in $\Phi^+$ such that $w(\alpha) \in \Phi^-$, and let $B_w$ be the set of all roots $\beta$ in $\Phi^+$ such that $w(\beta) \in \Phi^+$.  If $\sigma$ is  sum of distinct negative roots, then it may be written as
$$\sigma = -\alpha_1 - \cdots - \alpha_n - \beta_1 - \cdots - \beta_m$$
where $\alpha_i \in A_w$ for all $i$ and $\beta_j \in B_w$ for all $j$.  Then we have that
$$w \cdot \sigma = w(\sigma) + w \cdot 0 = -w(\alpha_1) - \cdots - w(\alpha_n) + w(-\beta_1) + \cdots + w(-\beta_m) + w\cdot 0.$$
Notice that
$$w \cdot 0 = \sum_{\alpha \in A_w} w(\alpha),$$
and so
$$-w(\alpha_1) - \cdots - w(\alpha_n) + w \cdot 0$$
is a sum of distinct negative roots
$$-w(\alpha_1) - \cdots - w(\alpha_n) + w \cdot 0 = w(\gamma_1) + \cdots + w(\gamma_l),$$
where the $\gamma_k$ are all in $A_w$.  Moreover, since $\beta_j$ is in $B_w$, each $w(-\beta_j)$ is a negative root, so this implies that $w \cdot \sigma$ is a sum of negative roots.  Finally, since the $\gamma_k$ and $-\beta_j$ are all distinct roots, so too are the $w(\gamma_k)$ and $w(-\beta_j)$, so $\sigma$ is a sum of distinct negative roots.
\end{proof}

\begin{proposition} Let $G=Q(n)$. The generic regions $w \cdot \Omega(w)$ are conjugate under the regular action of the Weyl group $W_{\0}$. Consequently, 
$\Omega=\cup_{w\in W_{\0}} w(\Omega(1))$.
\end{proposition}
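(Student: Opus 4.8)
The plan is to prove the sharper statement that $w\cdot\Omega(w)=w\big(\Omega(1)\big)$ for every $w\in W_{\0}$, where on the right $w$ acts by the ordinary linear action of the Weyl group; the two assertions of the proposition then follow at once by taking the union over $w\in W_{\0}$. Once the right combinatorial fact about $Q(n)$ is in place, everything reduces to a formal manipulation of the defining inequalities that is valid for an arbitrary subset $\overline{C}_{\mathbb Z}\subseteq X$.

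First I would record the combinatorial input. For $G=Q(n)$ one has $\operatorname{wt}(L_{\mathfrak f}(\lambda))=\{\lambda\}$, so, writing $\Sigma:=\operatorname{wt}(\Lambda^{\bullet}(({\mathfrak g}_{\1}/{\mathfrak b}_{\1})^{*}))$, the region is $\Omega(w)=\{\lambda\in X:\ \lambda+w^{-1}\sigma\in\overline{C}_{\mathbb Z}\ \text{for all}\ \sigma\in\Sigma\}$. From the matrix realization of ${\mathfrak b}$ in Section~\ref{S:RepTheory} and \cite[Table 7.1.3]{GGNW}, the weights of $({\mathfrak g}_{\1}/{\mathfrak b}_{\1})^{*}$ are exactly the negative roots of ${\mathfrak g}_{\0}$, each occurring with multiplicity one; hence $\Sigma$ is precisely the set of all sums of distinct negative roots of ${\mathfrak g}_{\0}$ (including the empty sum $0$). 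Viewing $\Sigma$ inside the root lattice of $\mathfrak{sl}_n$, whose Weyl group is $W_{\0}$ and whose shifts $w\rho-\rho$ are the same as those used in the dot action on $X$, the preceding Lemma shows $w\cdot\Sigma\subseteq\Sigma$ for every $w$; since $w\cdot(-)$ is a bijection of $X$ and $\Sigma$ is finite, in fact $w\cdot\Sigma=\Sigma$.

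Now fix $w\in W_{\0}$ and $\nu\in X$, and set $\lambda=w^{-1}\cdot\nu$, so that $\nu\in w\cdot\Omega(w)$ if and only if $\lambda\in\Omega(w)$. The crucial identity is
$$\lambda+w^{-1}\sigma=\big(w^{-1}(\nu+\rho)-\rho\big)+w^{-1}\sigma=w^{-1}\nu+\big(w^{-1}(\rho+\sigma)-\rho\big)=w^{-1}\nu+w^{-1}\cdot\sigma ,$$
which replaces the linear shift by $w^{-1}\sigma$ with the dot shift by $w^{-1}\cdot\sigma$. Therefore $\nu\in w\cdot\Omega(w)$ if and only if $w^{-1}\nu+w^{-1}\cdot\sigma\in\overline{C}_{\mathbb Z}$ for all $\sigma\in\Sigma$; and since $w^{-1}\cdot\Sigma=\Sigma$ by the previous step, this is equivalent to $w^{-1}\nu+\tau\in\overline{C}_{\mathbb Z}$ for all $\tau\in\Sigma$, i.e.\ to $w^{-1}\nu\in\Omega(1)$, i.e.\ to $\nu\in w(\Omega(1))$. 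This gives $w\cdot\Omega(w)=w(\Omega(1))$, and taking the union over $w\in W_{\0}$ yields $\Omega=\bigcup_{w\in W_{\0}}w(\Omega(1))$. I expect the only genuinely non-formal step to be the identification of $\Sigma$ with the sums of distinct \emph{negative} roots of ${\mathfrak g}_{\0}$ (it is essential that the negative, not the positive, system occurs, as that is precisely what makes the Lemma applicable), together with the harmless reduction from $\mathfrak{gl}_n$ to $\mathfrak{sl}_n$ needed to quote the Lemma.
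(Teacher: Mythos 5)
Your proposal is correct and follows essentially the same route as the paper: both hinge on the Lemma that the dot action sends sums of distinct negative roots to sums of distinct negative roots, combined with the $\rho$-shift identity $\lambda+w^{-1}\sigma=w^{-1}\nu+w^{-1}\cdot\sigma$ (the paper writes it as $\lambda+\rho+w^{-1}\cdot\sigma=[\lambda+w^{-1}\rho-\rho]+w^{-1}\sigma+\rho$). Your only cosmetic departures are packaging the two inclusions into one equivalence chain via the explicit observation $w\cdot\Sigma=\Sigma$, and spelling out that $\operatorname{wt}(\Lambda^{\bullet}(({\mathfrak g}_{\1}/{\mathfrak b}_{\1})^{*}))$ is exactly the set of sums of distinct negative even roots, which the paper leaves implicit.
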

\begin{proof}
It is enough to show that $w \cdot \Omega(w)$ is equal to $w(\Omega(1))$.  Let $\lambda \in \Omega(1)$.  Then for all positive roots $\alpha$ and all sums of distinct negative roots $\sigma$,
$$\langle \lambda + \sigma+ \rho, \alpha^{\vee} \rangle \geq 0.$$
However, by the above lemma, $w^{-1} \cdot \sigma$ is a sum of distinct negative roots, and so
$$\langle \lambda + w^{-1} \cdot \sigma +\rho, \alpha^{\vee} \rangle \geq 0.$$
Now 
$$\lambda + \rho + w^{-1} \cdot \sigma=[\lambda+ w^{-1}\rho-\rho]+w^{-1}\sigma +\rho.$$ 
Therefore, $\lambda+w^{-1}\rho-\rho$ is an element of $\Omega(w)$.  This, however, is equal to $w^{-1} \cdot (w \lambda)$, and so $w \lambda \in w \cdot \Omega(w)$.  Thus, 
$w(\Omega(1)) \subseteq w \cdot \Omega(w)$.  The other direction follows similarly.
\end{proof}

\subsection{Example: $G=Q(2)$} In this case $G_{\0}\cong GL_{2}$ and $W_{\0}\cong \Sigma_{2}=\{1,s_{\alpha} \}$. Moreover, 
$\text{wt}(\Lambda^{\bullet}(({\mathfrak g}_{\1}/{\mathfrak b}_{\1})^{*}))=\{0,-\alpha\}$. Using the definition of $\Omega(w)$, one can directly show that 
\begin{eqnarray*}
\Omega(1)&=&\{(\lambda_{1},\lambda_{2}):\ \lambda_{1}-\lambda_{2}\geq 1\} \\
\Omega(s_{\alpha})&=&\{(\lambda_{1},\lambda_{2}):\ \lambda_{1}-\lambda_{2}\geq -1\} 
\end{eqnarray*} 
Therefore, 
$$\Omega= \bigcup_{w\in W_{\0}} w\cdot \Omega(w)=\{\mu=(\mu_{1},\mu_{2}):\ \mu_{1}-\mu_{2}\neq 0\}.$$ 
It follows that for $\mu\in \Omega$, $H^{n}(\mu)|_{G_{\0}}$ can be computed for all $n$ by Theorem~\ref{T:generic}. 
This agrees with the calculation for $G=Q(2)$ given in \cite[Lemma 4.4]{Bru}. 

\subsection{Example: $G=Q(3)$} One has $G_{\0}\cong GL_{3}$ and 
$$W_{\0}\cong \Sigma_{3}=\{1,s_{\alpha_{1}},s_{\alpha_{2}},s_{\alpha_{1}}s_{\alpha_{2}},s_{\alpha_{2}}s_{\alpha_{1}},s_{\alpha_{1}}s_{\alpha_{2}}s_{\alpha_{1}} \}.$$
Moreover, the generic region $\Omega= \bigcup_{w\in W_{\0}} w\cdot \Omega(w)$ where 

\begin{eqnarray*}
\Omega(1)&=&\{\lambda\in X:\ \langle \lambda, \alpha_{1}^{\vee} \rangle \geq 2, \langle \lambda, \alpha_{2}^{\vee} \rangle \geq 2 \} \\
\Omega(s_{\alpha_{1}})&=&\{\lambda\in X:\ \langle \lambda, \alpha_{1}^{\vee} \rangle \geq 3, \langle \lambda, \alpha_{2}^{\vee} \rangle \geq 0 \}   \\
\Omega(s_{\alpha_{2}})&=&\{\lambda\in X:\ \langle \lambda, \alpha_{1}^{\vee} \rangle \geq 0, \langle \lambda, \alpha_{2}^{\vee} \rangle \geq 3 \}   \\
\Omega(s_{\alpha_{1}}s_{\alpha_{2}})&=&\{\lambda\in X:\ \langle \lambda, \alpha_{1}^{\vee} \rangle \geq 3, \langle \lambda, \alpha_{2}^{\vee} \rangle \geq -1 \}   \\
\Omega(s_{\alpha_{2}}s_{\alpha_{1}})&=&\{\lambda\in X:\ \langle \lambda, \alpha_{1}^{\vee} \rangle \geq -1, \langle \lambda, \alpha_{2}^{\vee} \rangle \geq 3 \}  \\
\Omega(s_{\alpha_{1}}s_{\alpha_{2}}s_{\alpha_{1}})&=&\{\lambda\in X:\ \langle \lambda, \alpha_{1}^{\vee} \rangle \geq 0, \langle \lambda, \alpha_{2}^{\vee} \rangle \geq 0 \}  \\
\end{eqnarray*} 
Therefore, it can be shown that
$$\Omega = \bigcup_{w\in W_{\0}} w\{\lambda\in X:\ \langle \lambda, \alpha_{1}^{\vee} \rangle \geq 2, \langle \lambda, \alpha_{2}^{\vee} \rangle \geq 2 \}, $$
i.e., the $W_{\0}$-conjugates of $\Omega(1)$ under the regular action. The generic region for this case is given below via the shaded regions. 
\vskip .5cm 
\begin{center}
\begin {tikzpicture}[baseline=.5, scale=1]
\begin {rootSystem}{A}
\wt{1}{0}
\draw \weight{0}{0} -- \weight{0}{1};
\draw[dashed] \weight{1}{0} -- \weight{9}{0};
\draw[dashed] \weight{0}{1} -- \weight{0}{9};
\node[above left=-2pt] at (hex cs:x=0,y=0) {\small\(0\)};
\node[above right=-2pt] at (hex cs:x=0,y=1) {\small\(\omega_2\)};
\node[above right=-2pt] at (hex cs:x=1,y=0) {\small\(\omega_1\)};
\draw \weight{0}{0} -- \weight{1}{0};
\wt{0}{1}
\wt{2}{2}
\wt{-2}{-2}
\wt{2}{-4}
\wt{4}{-2}
\wt{-4}{2}
\wt{-2}{4}
\fill[gray!50,opacity=.6] \weight{2}{2} -- \weight{2}{7} -- \weight{7}{2};
\fill[gray!50,opacity=.6] \weight{-2}{-2} -- \weight{-2}{-7} -- \weight{-7}{-2};
\fill[gray!50,opacity=.6] \weight{-2}{4} -- \weight{-2}{9} -- \weight{-7}{9};
\fill[gray!50,opacity=.6] \weight{2}{-4} -- \weight{2}{-9} -- \weight{7}{-9};
\fill[gray!50,opacity=.6] \weight{4}{-2} -- \weight{9}{-2} -- \weight{9}{-7};
\fill[gray!50,opacity=.6] \weight{-4}{2} -- \weight{-9}{2} -- \weight{-9}{7};
\end {rootSystem}
\end {tikzpicture}
\end{center}
\vskip .5cm 

\subsection{Comparison of Cohomology for $({\mathfrak g},{\mathfrak g}_{\0})$ and $({\mathfrak b},{\mathfrak b}_{\0})$} For reductive algebraic groups, one 
can use the induction functor to compare cohomology for $G$ to that for $P$ where $P$ is any parabolic subgroup \cite[I. 4.5 Proposition]{Jan}. Using Theorem~\ref{T:spectralseq3} and 
\ref{T:generic}, one can obtain a comparison theorem for extensions between modules for $({\mathfrak g},{\mathfrak g}_{\0})$ and $({\mathfrak b},{\mathfrak b}_{\0})$. 

\begin{theorem}\label{T:compareg-b} Let $G$ be a supergroup scheme where ${\mathfrak g}$ is a classical simple Lie superalgebra, and $B$ be a BBW parabolic subgroup of $G$. 
Moreover,  let $\lambda \in X$, $w\in W_{\0}$, $w\cdot \lambda$  be a generic weight where $\lambda\in \Omega(w)$ and $M$ be a $G$-module. Then for $i\geq 0$, 
$$\operatorname{Ext}^{i}_{({\mathfrak g},{\mathfrak g}_{\0})} (M,R^{l(w)}\operatorname{ind}_{B}^{G} L_{\mathfrak f}(w\cdot \lambda))\cong 
\operatorname{Ext}_{({\mathfrak b},{\mathfrak b}_{\0})}^{i+l(w)}(M,L_{\mathfrak f}(w\cdot\lambda)). $$ 
\end{theorem}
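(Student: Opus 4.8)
The plan is to combine the spectral sequence in Theorem~\ref{T:spectralseq3} with the Bott-Borel-Weil computation in Theorem~\ref{T:generic}. Setting $H = B$, $M_1 = M$, and $M_2 = L_{\mathfrak f}(w\cdot\lambda)$ in Theorem~\ref{T:spectralseq3}, I obtain a first quadrant spectral sequence
\begin{equation*}
E_2^{i,j} = \operatorname{Ext}^i_{({\mathfrak g},{\mathfrak g}_{\0})}(M, R^j\operatorname{ind}_B^G L_{\mathfrak f}(w\cdot\lambda)) \Rightarrow \operatorname{Ext}^{i+j}_{({\mathfrak b},{\mathfrak b}_{\0})}(M, L_{\mathfrak f}(w\cdot\lambda)).
\end{equation*}
The strategy is then to show this spectral sequence collapses, leaving only the row $j = l(w)$, which immediately gives the claimed isomorphism after re-indexing.

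The key input for collapse is that $R^j\operatorname{ind}_B^G L_{\mathfrak f}(w\cdot\lambda) = 0$ for $j \neq l(w)$. First I would note that Theorem~\ref{T:generic} gives this vanishing as $G_{\0}$-modules: since $w\cdot\lambda$ is generic with $\lambda \in \Omega(w)$, we have $(R^j\operatorname{ind}_B^G L_{\mathfrak f}(w\cdot\lambda))|_{G_{\0}} = 0$ for $j \neq l(w)$. Because restriction to $G_{\0}$ is faithful on the underlying vector spaces, vanishing as a $G_{\0}$-module forces vanishing as a $G$-module. Hence $E_2^{i,j} = 0$ unless $j = l(w)$, so the spectral sequence is concentrated in a single row. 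Since the differentials $d_r$ have bidegree $(r, 1-r)$ (as noted in the discussion preceding Theorem~\ref{T:spectralseq3}), every differential into or out of the surviving row is zero, so $E_2 = E_\infty$ and the abutment in total degree $i + l(w)$ is exactly $E_2^{i,l(w)}$. This yields
\begin{equation*}
\operatorname{Ext}^i_{({\mathfrak g},{\mathfrak g}_{\0})}(M, R^{l(w)}\operatorname{ind}_B^G L_{\mathfrak f}(w\cdot\lambda)) = E_2^{i,l(w)} \cong E_\infty^{i,l(w)} \cong \operatorname{Ext}^{i+l(w)}_{({\mathfrak b},{\mathfrak b}_{\0})}(M, L_{\mathfrak f}(w\cdot\lambda)),
\end{equation*}
which is the assertion of the theorem for all $i \geq 0$.

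The main obstacle, such as it is, is the step asserting that vanishing of $R^j\operatorname{ind}_B^G L_{\mathfrak f}(w\cdot\lambda)$ as a $G_{\0}$-module implies vanishing as a $G$-module. This is where one must be careful: Theorem~\ref{T:generic} is stated only after restriction to $G_{\0}$, but since the restriction functor $\operatorname{Mod}(G) \to \operatorname{Mod}(G_{\0})$ does not change the underlying vector space, a $G$-module whose restriction is zero is itself the zero module. So the obstacle is merely one of bookkeeping rather than mathematical substance, and once it is dispatched the collapse argument is formally identical to the degeneration arguments already used in the proofs of Theorem~\ref{T:Kempf} and Theorem~\ref{T:generic}. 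I would also remark that this result is the super-analog of \cite[I.4.5 Proposition]{Jan}, as indicated in the paragraph preceding the statement, and that the generic hypothesis on $w\cdot\lambda$ plays precisely the role that dominance plays in the classical setting.
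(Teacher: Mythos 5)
Your proposal is correct and follows essentially the same route as the paper: apply the spectral sequence of Theorem~\ref{T:spectralseq3} with $H=B$, use the generic-weight vanishing from Theorem~\ref{T:generic} to concentrate the $E_2$-page in the single row $j=l(w)$, and conclude by collapse. Your extra remark that vanishing after restriction to $G_{\0}$ forces vanishing as a $G$-module is a correct (and harmless) bit of bookkeeping that the paper leaves implicit.
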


\begin{proof} Consider the spectral sequence in Theorem~\ref{T:spectralseq3} with $H=B$. Under the condition that  $w\cdot \lambda$  be a generic weight, 
$R^{j}\operatorname{ind}_{B}^{G} L_{\mathfrak f}(w\cdot \lambda)\neq 0$ when $j\neq l(w)$. Therefore, the spectral sequence collapses, and 
$E_{2}^{i,l(w)}\cong \operatorname{Ext}_{({\mathfrak b},{\mathfrak b}_{\0})}^{i+l(w)}(M,L_{\mathfrak f}(w\cdot\lambda))$ for all $i\geq 0$.
\end{proof} 

\subsection{Summary: Characters of $H^{\bullet}(\lambda)$} The following fundamental (open) problems are of central importance in our theory of sheaf cohomology with 
BBW parabolic subgroups. 

\begin{problem} Determine when $H^{n}(\lambda)\neq 0$. 
\end{problem} 

\begin{problem} Compute $\operatorname{ch } H^{n}(\lambda)$ for all $\lambda\in X$ and $n\geq 0$. 
\end{problem} 

The problem is equivalent to determining the composition factors of  $\operatorname{ch} H^{n}(\lambda)$ as a $G_{\0}$-module. 
Some information especially for small weights can be obtained via methods of Broer (cf. \cite[Lemma 2.10]{Br}). Brundan \cite[Corollary 2.8]{Bru} proved that the Euler characteristic is given 
by 
$$\sum_{n\geq 0} (-1)^{n} \text{ch } H^{n}(\lambda)=\sum_{n\geq 0} (-1)^{n} \text{ch } R^{n}\text{ind}_{B_{\0}}^{G_{\0}}L_{\mathfrak f}(\lambda)\otimes 
\Lambda^{\bullet}(({\mathfrak g}_{\1}/{\mathfrak b}_{\1})^{*}).$$
From our results in the previous sections, 
\begin{itemize}
\item[(i)] $\text{ch } H^{n}(0)=\text{ch } R^{n}\text{ind}_{B}^{G}{\mathbb C}$ is known by Theorem~\ref{T:GGNWRedux}. 
\item[(ii)] $\text{ch } H^{n}(\lambda)$ is known for large weights $\lambda$ given by the conditions in Theorems~\ref{T:Kempf} and~\ref{T:generic}. 
\end{itemize} 
It remains to determine the behavior for $\text{ch } H^{n}(\lambda)$ when $\lambda$ is outside  the ``generic region" $\Omega$. 

\section{Results on $H^{1}(\lambda)$}

\subsection{} For the moment, assume that $G$ is a reductive algebraic group and $B$ is a Borel subgroup (arising from the negative roots) \cite{Jan}. If $\lambda$ is a weight then 
Andersen \cite{And} proved that either $H^{1}(\lambda)=\text{ind}_{B}^{G} \lambda$ is either zero or has a simple $G$-socle. The socle of $H^{1}(\lambda)$, $\text{soc}_{G}H^{1}(\lambda)$, can be 
explicitly described \cite[II 5.15 Proposition]{Jan}. For $H^{n}(\lambda)$, $n\geq 2$, the vanishing behavior remains an open question over fields of characteristic $p>0$. 

Let us now return to the situation where $G$ is a supergroup scheme with $\text{Lie }G={\mathfrak g}$ where ${\mathfrak g}$ is a simple classical Lie superalgebra and $B$ is a BBW parabolic subgroup in 
$G$. In Section~\ref{S:simplemodules}, we proved that $H^{0}(\lambda)$ is either zero or has simple socle. In dramatic contrast to the situation for reductive groups, 
Theorem~\ref{T:GGNWRedux} demonstrates that $H^{1}(\lambda)$ need not have simple socle. For example, if $G=Q(3)$ then $H^{1}((0,0,0))=R^{1}\text{ind}_{B}^{G}{\mathbb C}\cong {\mathbb C}\oplus 
{\mathbb C}$. 

\subsection{Socles for $H^{1}(\lambda)$} Let $P=L_{P}\ltimes U_{P}$ be a parabolic subgroup such that $B\subseteq P \subseteq G$. For any $\sigma\in X_{F,+}$, let $\bar{\sigma}$ be the weight in $X$ with $L_{P}(\bar{\sigma})=L(\sigma)^{U_{P}}$ where $L_{P}(\bar{\sigma})$ is the inflation of a simple $L_{P}$-module. 

\begin{theorem}\label{T:soclecompare} Let $G$ be a supergroup arising from a simple Lie superalgebra ${\mathfrak g}$, $B$ be a BBW parabolic subgroup and $\lambda\in X$. Suppose there exists a parabolic subgroup scheme $P$ in $G$ with 
$B\subseteq P \subseteq G$ with $R^{0}\operatorname{ind}_{B}^{P} L_{\mathfrak f}(\lambda)=0$. Then 
$$[\operatorname{soc}_{G}H^{1}(\lambda):L(\sigma)]=[\operatorname{soc}_{L} R^{1}\operatorname{ind}_{B}^{P}L_{\mathfrak f}(\lambda):L_{P}(\bar{\sigma})]$$ 
\end{theorem}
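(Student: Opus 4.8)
The plan is to use Spectral Sequence II (Theorem~\ref{T:spectralseq2}) applied to the chain of closed subgroup schemes $B\subseteq P\subseteq G$ with $N=L_{\mathfrak f}(\lambda)$, giving a first quadrant spectral sequence $E_{2}^{i,j}=R^{i}\operatorname{ind}_{P}^{G}[R^{j}\operatorname{ind}_{B}^{P}L_{\mathfrak f}(\lambda)]\Rightarrow R^{i+j}\operatorname{ind}_{B}^{G}L_{\mathfrak f}(\lambda)=H^{i+j}(\lambda)$. The hypothesis $R^{0}\operatorname{ind}_{B}^{P}L_{\mathfrak f}(\lambda)=0$ kills the entire bottom row $j=0$, so the only contribution in total degree $1$ comes from $E_{2}^{0,1}=\operatorname{ind}_{P}^{G}[R^{1}\operatorname{ind}_{B}^{P}L_{\mathfrak f}(\lambda)]$. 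Since this term sits in the corner column $i=0$, no differential can leave or enter it on the $E_{2}$ page or later (the differentials have bidegree $(r,1-r)$), so it survives to $E_{\infty}$ and we obtain an isomorphism of $G$-modules
$$H^{1}(\lambda)\cong \operatorname{ind}_{P}^{G}\left[R^{1}\operatorname{ind}_{B}^{P}L_{\mathfrak f}(\lambda)\right].$$
Write $V:=R^{1}\operatorname{ind}_{B}^{P}L_{\mathfrak f}(\lambda)$, a rational $P$-module (finite-dimensional by the remark following Theorem~\ref{T:spectralseq1}), so that $H^{1}(\lambda)\cong\operatorname{ind}_{P}^{G}V$.

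Next I would compute the $G$-socle of $\operatorname{ind}_{P}^{G}V$ in terms of the $L_{P}$-socle of $V$. By Frobenius reciprocity and the transitivity of induction, for a simple $G$-module $L(\sigma)$ one has $\operatorname{Hom}_{G}(L(\sigma),\operatorname{ind}_{P}^{G}V)\cong\operatorname{Hom}_{P}(L(\sigma),V)$. The module $V$ is a $U_{P}$-trivial $P$-module? — this is not automatic, so instead I would argue as in Section~\ref{S:simplemodules}: a simple $G$-module $L(\sigma)$ embeds in $\operatorname{ind}_{P}^{G}V$ iff $L(\sigma)$ maps nontrivially to $V$ as a $P$-module, which (since the $U^{+}$-fixed/$U_P$-cofixed analysis identifies the relevant $L_P$-constituent $L_P(\bar\sigma)=L(\sigma)^{U_P}$) translates to $[\operatorname{soc}_{L_P}V:L_P(\bar\sigma)]$ copies of $L(\sigma)$ in $\operatorname{soc}_G\operatorname{ind}_{P}^{G}V$. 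More precisely, I would show $\operatorname{Hom}_P(L(\sigma),V)\cong\operatorname{Hom}_{L_P}(L(\sigma)^{U_P},V^{U_P})$? — rather than the $U_P$-invariants of $V$, the cleaner route is to observe that any $P$-map from the simple $G$-module $L(\sigma)$ factors through its $U_P$-socle structure, and the multiplicity of $L(\sigma)$ in $\operatorname{soc}_G\operatorname{ind}_P^G V$ equals $\dim\operatorname{Hom}_P(L(\sigma),V)$, which in turn equals $[\operatorname{soc}_{L_P}V:L_P(\bar\sigma)]$ by the definition of $\bar\sigma$ and the simplicity of $L_P(\bar\sigma)$. Combining the two displayed isomorphisms gives
$$[\operatorname{soc}_{G}H^{1}(\lambda):L(\sigma)]=\dim\operatorname{Hom}_{G}(L(\sigma),\operatorname{ind}_{P}^{G}V)=[\operatorname{soc}_{L_P}V:L_P(\bar\sigma)],$$
which is exactly the claimed formula (with $L=L_P$).

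The main obstacle I expect is the second step: carefully justifying that $\dim\operatorname{Hom}_{G}(L(\sigma),\operatorname{ind}_{P}^{G}V)=[\operatorname{soc}_{L_P}V:L_P(\bar\sigma)]$, i.e. that passing to $U_P$-fixed points and restricting to $L_P$ faithfully detects the $G$-socle multiplicity. In the reductive case this is II.2.2 of \cite{Jan}; here one must check that the supergroup analogs — the embedding $[\operatorname{ind}_P^G V]^{U^+}\hookrightarrow V$ of $L_P$-modules, and the fact that every simple $G$-module has nonzero $U^+$-fixed points with the correct $L_P$-structure $L(\sigma)^{U_P}=L_P(\bar\sigma)$ — go through, which they do by the arguments already set up in Section~\ref{S:simplemodules} together with the fact that $U_P\subseteq U$ is unipotent. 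The collapse of the spectral sequence in the first step is routine given the vanishing hypothesis, and the finite-dimensionality needed to talk about composition multiplicities follows from the discussion after Theorem~\ref{T:spectralseq1}.
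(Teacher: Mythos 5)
Your proposal follows essentially the same route as the paper. Step 1 is identical: the paper applies Theorem~\ref{T:spectralseq2} with $K=P$, $H=B$, $N=L_{\mathfrak f}(\lambda)$, uses the hypothesis to kill $E_{2}^{i,0}$ for all $i$ (via the five-term exact sequence, where you argue directly on the $E_{2}$ page), and concludes $H^{1}(\lambda)\cong \operatorname{ind}_{P}^{G}\bigl[R^{1}\operatorname{ind}_{B}^{P}L_{\mathfrak f}(\lambda)\bigr]$. For step 2 the paper also starts from Frobenius reciprocity, but then makes precise exactly the point on which you hedged: writing $V=R^{1}\operatorname{ind}_{B}^{P}L_{\mathfrak f}(\lambda)$, it computes
$$\operatorname{Hom}_{P}(L(\sigma),V)\cong \operatorname{Hom}_{L_{P}}\bigl(k,\operatorname{Hom}_{U_{P}}(k,L(\sigma)^{*}\otimes V)\bigr)\cong \operatorname{Hom}_{L_{P}}\bigl(k,\operatorname{Hom}_{U_{P}}(k,L(\sigma)^{*})\otimes V\bigr)\cong \operatorname{Hom}_{L_{P}}\bigl(L_{P}(\bar{\sigma}),V\bigr),$$
where the middle isomorphism is precisely the place where $V$ is pulled out of the $U_{P}$-fixed points. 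So the identification $\dim\operatorname{Hom}_{P}(L(\sigma),V)=[\operatorname{soc}_{L_{P}}V:L_{P}(\bar{\sigma})]$, which you asserted ``by the definition of $\bar{\sigma}$,'' does rest on knowing that $U_{P}$ acts trivially on $V$ (or on an equivalent fixed-point manipulation), and this is the one real soft spot in your sketch. Your worry that it ``is not automatic'' dissolves: $U_{P}\subseteq U$ acts trivially on the inflated module $L_{\mathfrak f}(\lambda)$, so induction from $B$ to $P$ and its derived functors factor through the quotient $P/U_{P}\cong L_{P}$ (the supergroup analogue of the compatibility of induction with quotients in \cite[I.6]{Jan}), making $V$ a $U_{P}$-trivial $P$-module; this is also what allows the statement to speak of $\operatorname{soc}_{L}V$. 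With that observation inserted, your argument becomes the paper's proof.
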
 

\begin{proof} Suppose that  $R^{0}\operatorname{ind}_{B}^{P} L_{\mathfrak f}(\lambda)=0$. Consider the spectral sequence given in 
Theorem~\ref{T:spectralseq2} with $K=P$, $H=B$ and $N=L_{\mathfrak f}(\lambda)$. One has a five term exact sequence of the form 
$$0\rightarrow E_{2}^{1,0}\rightarrow E_{1} \rightarrow E_{2}^{0,1} \rightarrow E_{2}^{2,0} \rightarrow E_{2}.$$ 
The assumption implies that $E_{2}^{i,0}=0$ for $i\geq 0$. Therefore, 
$$H^{1}(\lambda)\cong \text{ind}_{P}^{G} [R^{1}\text{ind}_{B}^{P} L_{\mathfrak f}(\lambda)].$$ 
In order to compute the socle we need to consider homomorphisms of $L(\sigma)$ into $H^{1}(\lambda)$:
\begin{eqnarray*} 
\text{Hom}_{G}(L(\sigma),H^{1}(\lambda))&\cong & \text{Hom}_{G}(L(\sigma), \text{ind}_{P}^{G} [R^{1}\text{ind}_{B}^{P} L_{\mathfrak f}(\lambda)])\\
&\cong & \text{Hom}_{P}(L(\sigma), R^{1}\text{ind}_{B}^{P} L_{\mathfrak f}(\lambda))\\
&\cong & \text{Hom}_{L_{P}}(k,\text{Hom}_{U_{P}}(k,L(\sigma)^*\otimes R^{1}\text{ind}_{B}^{P}  L_{\mathfrak f}(\lambda))) \\
&\cong & \text{Hom}_{L_{P}}(k,\text{Hom}_{U_{P}}(k,L(\sigma)^*)\otimes R^{1}\text{ind}_{B}^{P}  L_{\mathfrak f}(\lambda)) \\
&\cong & \text{Hom}_{L_{P}}(k,L_{P}(\bar{\sigma})^{*}\otimes R^{1}\text{ind}_{B}^{P}  L_{\mathfrak f}(\lambda)) \\
&\cong & \text{Hom}_{L_{P}}(L_{P}(\bar{\sigma})),R^{1}\text{ind}_{B}^{P}  L_{\mathfrak f}(\lambda)).
\end{eqnarray*}
The statement of the theorem follows from this chain of isomorphisms. 
\end{proof}

\subsection{} The following result uses Theorem~\ref{T:soclecompare} and  provides a criterion for the irreducibility of $\operatorname{soc}_{G} H^{1}(\lambda)$. 

\begin{corollary}\label{C:simplesoc} Let $G$ be a supergroup arising from a simple classical Lie superalgebra ${\mathfrak g}$, $B$ be a BBW parabolic subgroup and $\lambda\in X$. Suppose there exists a parabolic subgroup scheme $P$ in $G$ with 
$B\subseteq P \subseteq G$. Assume that 
\begin{itemize} 
\item[(a)] $R^{0}\operatorname{ind}_{B}^{P} L_{\mathfrak f}(\lambda)=0$,
\item[(b)] $R^{1}\operatorname{ind}_{B}^{P} L_{\mathfrak f}(\lambda)$ has simple $L$-socle.  
\end{itemize} 
Then $\operatorname{soc}_{G} H^{1}(\lambda)$ is simple. 
\end{corollary}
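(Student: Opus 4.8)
The plan is to obtain the corollary as a direct bookkeeping consequence of Theorem~\ref{T:soclecompare}. Since hypothesis~(a), $R^{0}\operatorname{ind}_{B}^{P}L_{\mathfrak f}(\lambda)=0$, is precisely the standing hypothesis of Theorem~\ref{T:soclecompare}, that theorem applies and gives, for every $\sigma\in X_{F,+}$,
\[
[\operatorname{soc}_{G}H^{1}(\lambda):L(\sigma)]=[\operatorname{soc}_{L}R^{1}\operatorname{ind}_{B}^{P}L_{\mathfrak f}(\lambda):L_{P}(\bar\sigma)].
\]
So everything reduces to analysing the right-hand side. Recall that $H^{1}(\lambda)$ is finite-dimensional, by the finiteness remark following Theorem~\ref{T:spectralseq1}, so $\operatorname{soc}_{G}H^{1}(\lambda)$ is a finite semisimple $G$-module, and by the classification in Section~\ref{S:simplemodules} its constituents lie among the $L(\sigma)$, $\sigma\in X_{F,+}$; thus the displayed identity accounts for all of $\operatorname{soc}_{G}H^{1}(\lambda)$.

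Next I would feed in hypothesis~(b): $\operatorname{soc}_{L}R^{1}\operatorname{ind}_{B}^{P}L_{\mathfrak f}(\lambda)$ is (zero or) a simple $L$-module $S$. In either case the composition-factor multiplicity $[\operatorname{soc}_{L}R^{1}\operatorname{ind}_{B}^{P}L_{\mathfrak f}(\lambda):L_{P}(\bar\sigma)]$ lies in $\{0,1\}$, so every $[\operatorname{soc}_{G}H^{1}(\lambda):L(\sigma)]$ is $0$ or $1$. It remains to see that at most one $\sigma\in X_{F,+}$ can contribute. For this I would observe that $\sigma\mapsto L_{P}(\bar\sigma)=L(\sigma)^{U_{P}}$ is injective on $X_{F,+}$: taking further fixed points under the opposite unipotent part of $B$ lying in $L_{P}$ recovers the simple $F$-module $L_{\mathfrak f}(\sigma)$, and hence $\sigma$, from $L_{P}(\bar\sigma)$, exactly as in the simple-module analysis of Section~\ref{S:simplemodules}. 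Consequently $S\cong L_{P}(\bar{\sigma_{0}})$ for at most one $\sigma_{0}$.

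Putting this together, $\operatorname{soc}_{G}H^{1}(\lambda)$ is a finite semisimple $G$-module in which at most one isomorphism type $L(\sigma_{0})$ occurs, with multiplicity at most $1$; hence $\operatorname{soc}_{G}H^{1}(\lambda)$ is either $0$ or isomorphic to $L(\sigma_{0})$, i.e.\ simple in the same convention used for $H^{0}(\lambda)$. The argument needs no spectral-sequence input beyond Theorems~\ref{T:spectralseq2} and~\ref{T:soclecompare}; the only points requiring a little care are the injectivity of $\sigma\mapsto L_{P}(\bar\sigma)$ and the degenerate case $S=0$, i.e.\ $R^{1}\operatorname{ind}_{B}^{P}L_{\mathfrak f}(\lambda)=0$, which simply forces $\operatorname{soc}_{G}H^{1}(\lambda)=0$.
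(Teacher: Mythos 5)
Your proposal is correct and is essentially the paper's own argument: the paper offers no separate proof of Corollary~\ref{C:simplesoc}, treating it as immediate bookkeeping from Theorem~\ref{T:soclecompare} exactly as you carry out (hypothesis (a) activates the multiplicity formula, hypothesis (b) forces every multiplicity into $\{0,1\}$ with at most one contributing $\sigma$). The only caveat is your mechanism for the injectivity of $\sigma\mapsto L_{P}(\bar\sigma)$: since $U_{P}\subseteq U$ lies on the opposite side from $U^{+}\cap L_{P}$, the fixed points $\bigl(L(\sigma)^{U_{P}}\bigr)^{U^{+}\cap L_{P}}$ do not recover $L_{\mathfrak f}(\sigma)$ itself but rather an extremal piece of type $L_{\mathfrak f}(w\sigma)$ for a fixed $w$ depending only on $P$ — which still determines $\sigma$, so the injectivity (a fact the paper assumes silently) and hence your conclusion stand.
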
 

\subsection{Applications} Let $G=Q(2)$ and $\sigma = (\sigma_1, \sigma_2)$ be a weight in $(X_{\0})_{+}$.  In \cite[Section 7]{Pen}, Penkov computed the characters of all the irreducible $Q(2)$-modules $L(\sigma)$ of highest weight $\sigma$.  In particular, suppose $\sigma$ is not an integer multiple of $\rho$.  Then
$$\ch L(\sigma) =
\begin{cases}
e^0 & \sigma= 0 \\
2 \ch L_{\0}(\sigma) + 2 \ch L_{\0}(\sigma-\alpha) & \sigma_1-\sigma_2 \neq 1 \\
2 \ch L_{\0}(\sigma) & \sigma_1 - \sigma_2 = 1 \\
\end{cases},$$
where $L_{\0}(\sigma)$ is the irreducible $G_{\bar{0}}$-module of highest weight $\sigma$, with
$$\ch L_{\0}(\sigma) = e^\sigma + e^{\sigma-\alpha} + \cdots + e^{w\sigma+\alpha} + e^{w\sigma}.$$
If $\sigma$ is an integer multiple of $\rho$, then
$$\ch L(\sigma) = 2 \ch L_{\0}(\sigma).$$

On the other hand, from \cite[Lemma 4.4]{Bru}  the character of the induced modules $\text{ind}_B^G L_{\mathfrak{f}}(\sigma)$ for nonzero dominant $\sigma$ is 
$$\ch \text{ind}_B^G L_{\mathfrak{f}}(\sigma) = 2(e^\sigma + 2 e^{\sigma-\alpha} + \cdots + 2 e^{w \sigma + \alpha} + e^{w\sigma}).$$
Therefore, when $\sigma$ is not an integer multiple of $\rho$ with $\sigma_1 \neq \sigma_2+1$, $\text{ind}_B^G L_{\mathfrak{f}}(\sigma)$ is an irreducible module isomorphic to $L(\sigma)$.  
Otherwise, $\text{ind}_B^G L_{\mathfrak{f}}(\sigma)$ has length 2, with composition factors $L(\sigma)$ and $L(\sigma-\alpha)$.  Moreover, by Serre duality \cite[Theorem 5.1]{Bru}, 
$H^1(\sigma) \cong H^0(-\sigma)^*$.  If $w \sigma \in (X_{\0})_{+}$, then $H^1(\sigma)$ is either be irreducible or has socle isomorphic to $L(w\sigma-\alpha)$.  In summary, if 
$H^{1}(\sigma)\neq 0$ then $H^1(\sigma)$ will have simple $G$-socle.

Let $G=Q(n)$. Consider a minimal parabolic subgroup $P_{\alpha}$ containing $B$ with $\text{Lie }P_{\alpha}$ the Lie superalgebra generated by 
${\mathfrak b}=\text{Lie }B$ with the root spaces $x_{\alpha}$ where $\alpha\in \Delta_{\0}$ where $\Delta_{\0}=\{\epsilon_{i}-\epsilon_{i+1}:\ i=1,2,\dots,n\}$. Then 
$P_{\alpha}=L_{\alpha}\ltimes U_{\alpha}$ where $L_{\alpha}$ is a supergroup scheme of type $Q(2)$. 

Now assume that $\lambda\in X$ with $\langle \lambda,\alpha^{\vee} \rangle <0$. Then by Theorem~\ref{T:spectralseq1}, 
$$R^{0}\text{ind}_{B}^{P_{\alpha}} L_{\mathfrak f}(\lambda)|_{L_{\alpha}}\cong 
R^{0}\text{ind}_{B\cap L_{\alpha}}^{L_{\alpha}} L_{\mathfrak f}(\lambda)|_{L_{\alpha}}=0.$$ 
One can now invoke Theorem~\ref{T:soclecompare}
\begin{equation}\label{E:soclecompare}
[\operatorname{soc}_{G}H^{1}(\lambda):L(\sigma)]=[\operatorname{soc}_{L_{\alpha}} R^{1}\operatorname{ind}_{B}^{P_{\alpha}}L_{\mathfrak f}(\lambda):L_{P_{\alpha}}(\bar{\sigma})].
\end{equation}
The analysis for $Q(2)$ shows that $R^{1}\operatorname{ind}_{B}^{P_{\alpha}}L_{\mathfrak f}(\lambda)$ has a simple $L_{\alpha}$-socle. We can now state the following theorem. 

\begin{theorem} Let $G=Q(n)$ and $\lambda\in X$ where $\langle \lambda,\alpha^{\vee} \rangle <0$ for some $\alpha\in \Delta_{\0}$. 
Then $H^{1}(\lambda)$ has a simple $G$-socle. 
\end{theorem}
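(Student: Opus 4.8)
The plan is to reduce the statement about $H^{1}(\lambda)$ for $G = Q(n)$ directly to the already-established analysis for $Q(2)$ via the minimal parabolic $P_{\alpha}$ and the socle-comparison machinery of Theorem~\ref{T:soclecompare}. By hypothesis we have a simple root $\alpha \in \Delta_{\0}$ with $\langle \lambda, \alpha^{\vee}\rangle < 0$; fix this $\alpha$ and let $P_{\alpha} = L_{\alpha} \ltimes U_{\alpha}$ be the corresponding minimal parabolic, where $L_{\alpha}$ is a copy of $Q(2)$. The first step is to verify the vanishing hypothesis (a) of Theorem~\ref{T:soclecompare}, namely $R^{0}\operatorname{ind}_{B}^{P_{\alpha}} L_{\mathfrak f}(\lambda) = 0$. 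This is exactly the displayed computation just above the theorem statement: restricting to $L_{\alpha}$ and applying Theorem~\ref{T:spectralseq1} (or more simply the fact that induction to $L_{\alpha}$ from $B \cap L_{\alpha}$ is controlled by the even part), the condition $\langle \lambda, \alpha^{\vee}\rangle < 0$ forces $R^{0}\operatorname{ind}_{B_{\0}}^{(L_{\alpha})_{\0}} \lambda = 0$ since $\lambda$ is not dominant for the $\mathfrak{sl}_{2}$-triple attached to $\alpha$, and hence the super-level $R^{0}$ vanishes as well.

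Next I would apply Theorem~\ref{T:soclecompare} with $P = P_{\alpha}$ to obtain the multiplicity identity
$$[\operatorname{soc}_{G} H^{1}(\lambda) : L(\sigma)] = [\operatorname{soc}_{L_{\alpha}} R^{1}\operatorname{ind}_{B}^{P_{\alpha}} L_{\mathfrak f}(\lambda) : L_{P_{\alpha}}(\bar{\sigma})]$$
for every $\sigma \in X_{F,+}$. To conclude that $\operatorname{soc}_{G} H^{1}(\lambda)$ is simple, it suffices — by Corollary~\ref{C:simplesoc} — to show that $R^{1}\operatorname{ind}_{B}^{P_{\alpha}} L_{\mathfrak f}(\lambda)$ has simple $L_{\alpha}$-socle. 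Now restricting the situation to $L_{\alpha} \cong Q(2)$, the module $R^{1}\operatorname{ind}_{B}^{P_{\alpha}} L_{\mathfrak f}(\lambda)$ is (up to the action of the derived/central factors that commute with everything) the $H^{1}$ of the $Q(2)$-theory applied to the appropriate weight, namely the $\alpha$-component of $\lambda$, which lies in the non-dominant chamber for $Q(2)$. The $Q(2)$ analysis carried out in the preceding subsection — using Penkov's character formulas \cite{Pen} together with Brundan's computation \cite[Lemma 4.4]{Bru} and Serre duality \cite[Theorem 5.1]{Bru} — shows precisely that $H^{1}(\sigma)$ for $Q(2)$ has simple $G$-socle whenever it is nonzero. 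Hence $R^{1}\operatorname{ind}_{B}^{P_{\alpha}} L_{\mathfrak f}(\lambda)$ has simple $L_{\alpha}$-socle, and Corollary~\ref{C:simplesoc} gives the result.

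I expect the main obstacle to be the bookkeeping in the second step: identifying $R^{1}\operatorname{ind}_{B}^{P_{\alpha}} L_{\mathfrak f}(\lambda)$ as an $L_{\alpha}$-module precisely enough to invoke the $Q(2)$ result. One must check that $L_{\mathfrak f}(\lambda)$, inflated from $B$ and then induced up to $P_{\alpha}$ and restricted to $L_{\alpha}$, agrees with $\operatorname{ind}_{B \cap L_{\alpha}}^{L_{\alpha}}$ of the restriction of $L_{\mathfrak f}(\lambda)$ to $B \cap L_{\alpha}$ — this is the content of the displayed isomorphism \eqref{E:soclecompare} and rests on Theorem~\ref{T:spectralseq1} applied to the Levi $L_{\alpha}$ with its own BBW parabolic $B \cap L_{\alpha}$ — and then that the relevant weight for the $Q(2)$-factor is genuinely non-dominant so that the $Q(2)$ analysis (which treats non-dominant $\sigma$) applies and yields nonvanishing-or-simple-socle. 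The central factors $Q(1) \times \cdots \times Q(1)$ of $F$ outside the $\alpha$-block contribute only a tensor factor that is $L_{\alpha}$-trivial (acting through the torus) and does not affect simplicity of the socle as an $L_{\alpha}$-module, but this should be noted explicitly. Once these identifications are in place, the argument is a clean citation of Corollary~\ref{C:simplesoc} and the $Q(2)$ computation.
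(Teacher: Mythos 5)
Your proposal is correct and takes essentially the same route as the paper: verify $R^{0}\operatorname{ind}_{B}^{P_{\alpha}}L_{\mathfrak f}(\lambda)=0$ from $\langle\lambda,\alpha^{\vee}\rangle<0$ via Theorem~\ref{T:spectralseq1}, then apply Theorem~\ref{T:soclecompare} (with Corollary~\ref{C:simplesoc}) for the minimal parabolic $P_{\alpha}$ and conclude from the $Q(2)$ analysis that $R^{1}\operatorname{ind}_{B}^{P_{\alpha}}L_{\mathfrak f}(\lambda)$ has simple $L_{\alpha}$-socle. Your additional remarks on identifying the $L_{\alpha}$-module structure are careful bookkeeping of the same argument rather than a different method.
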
 

\subsection{} When ${\mathfrak g}$ is a simple classical Lie superalgebra of type other than $Q$. Then a similar type of analysis can be done for minimal parabolic 
subgroups $P_{\alpha}=L_{\alpha}\ltimes U_{\alpha}$ where $L_{\alpha}$ is of type $GL(2|2)$. This motivates the following open problem. 

\begin{problem} Determine when $\text{soc}_{G} H^{1}(\lambda)$ for $G=GL(2|2)$. 
\end{problem} 

The solution to the aformentioned problem in conjunction with Corollary~\ref{C:simplesoc} would provide necessary insights into solving the more 
general problem. 

\begin{problem} Compute $\text{soc}_{G} H^{1}(\lambda)$ for all $\lambda\in X$. 
\end{problem} 

The sheaf cohomology groups $H^{n}(\lambda)$ for $n\geq 0$ are central objects for the cohomology and representation theory of $G$. 
As demonstrated in this paper, these sheaf cohomology groups unify the theory of Lie superalgebra representations. Their vanishing behavior is tied in with the 
combinatorics of the Weyl group for $G_{\0}$ acting on odd roots. Furthermore, concrete calculations are highly dependent on the use of the detecting subalgebra ${\mathfrak f}$ along with the finite reflection group $W_{\1}$. This produces a unique mixture of the odd and even theories. Further investigations along these
lines should yield solutions to the open questions about these $G$-modules and provide new insights into the representation theory for classical simple Lie superalgebras.

\end{document}